\documentclass{article}
\usepackage{amsmath,amssymb,amsthm}
\usepackage{mathtools}
\usepackage{dsfont}
\usepackage{bm}
\usepackage{braket}
\usepackage{graphicx}
\usepackage[shortlabels]{enumitem}
\usepackage{hyperref}

\graphicspath{{Figs/}}

\newtheorem{thm}{Theorem}
\newtheorem{lem}[thm]{Lemma}
\newtheorem{prop}[thm]{Proposition}
\newtheorem{cor}[thm]{Corollary}
\theoremstyle{definition}
\newtheorem{defn}[thm]{Definition}

\def\ve#1{\mathchoice{\mbox{\boldmath$\displaystyle\bf#1$}}
{\mbox{\boldmath$\textstyle\bf#1$}}
{\mbox{\boldmath$\scriptstyle\bf#1$}}
{\mbox{\boldmath$\scriptscriptstyle\bf#1$}}}

\newcommand\veb{{\ve b}}
\newcommand\vecc{{\ve c}}
\newcommand\ved{{\ve d}}

\newcommand\ves{{\ve s}}

\newcommand\vev{{\ve v}}
\newcommand\vew{{\ve w}}
\newcommand\vex{{\ve x}}
\newcommand\vey{{\ve y}}
\newcommand\vez{{\ve z}}
\newcommand\veo{{\ve 0}}

\newcommand\lp{(\ref{lp})}
\newcommand\lpi{(\ref{lpi})}

\DeclareMathOperator{\rank}{rank}

\newcommand{\R}{\mathds{R}}

\newcommand{\Z}{\mathds{Z}}
\newcommand{\T}{{\intercal}}
\newcommand{\mc}{\mathcal{C}}
\newcommand{\mcb}{\mathcal{C}(B)}

\newcommand{\pol}{\{\vex\in\R^n:A\vex = \veb,\, D\vex \leq \ved\}}

\newcommand{\name}{Shifting Shadow }
\usepackage[margin=1.2in]{geometry}
\begin{document}

\title{Escaping Degeneracy by Following Shadow Edges}

\author{
Alexander E. Black\thanks{\texttt{a.black@bowdoin.edu}; Bowdoin College, Brunswick, ME, USA}
\and Sean Kafer\thanks{\texttt{spkafer@ilstu.edu}; Illinois State University, Department of Mathematics}
\and Laura Sanit\`a\thanks{\texttt{laura.sanita@unibocconi.it}; Bocconi University, Milan, Italy}
}

\maketitle
\begin{abstract}
The Simplex method is among the most widely used approaches for solving linear programs. Starting at a vertex solution of the feasible region, the algorithm proceeds through a sequence of basis exchanges (known as \emph{pivots}), each corresponding to a move along an improving edge of the polyhedron toward a better vertex.
A central challenge affecting the efficiency of the Simplex method is \emph{degeneracy}, which can cause long sequences of pivot operations that fail to change the current vertex solution.

In this paper, we prove the existence of a pivot rule which is able to escape degeneracy and follow any given \emph{shadow edge-direction} when initialized at some compatible basis, with a linear number of degenerate Simplex pivots. As a byproduct of our result, we obtain an improved bound on the number of degenerate Simplex pivots needed to solve linear programs defined on 0/1 polytopes.
\end{abstract}

\section{Introduction}\label{sec:intro}
Linear optimization, or linear programming, is a fundamental mathematical problem that concerns maximizing a linear objective function subject to a finite set of linear constraints. It not only serves as a powerful tool for decision-making in practical situations, but also provides the foundation for more advanced optimization techniques, such as nonlinear programming \cite{MINLP} and integer programming \cite{Schrijver1986}. For this reason, algorithms for linear optimization are essential and form the core of the most sophisticated commercial solvers used in industry.

From an algorithmic perspective, linear programs (LPs) may be solved in polynomial time via the ellipsoid method~\cite{Kach79} and interior point methods~\cite{Kar84}. However, such algorithms run in so-called \emph{weakly} polynomial time, that is, the number of elementary operations they perform depends on the size of the coefficients involved in the description of the LP. Finding a \emph{strongly} polynomial time algorithm for linear programming is a `holy grail' open problem in discrete mathematics and theoretical computer science, listed indeed as one of the key mathematical problems for the next century by Smale~\cite{Smale00}.

One candidate algorithm which could potentially serve in accomplishing this is the Simplex method \cite{DantzigSimplex}. The Simplex method is one of the most popular algorithms used in practice. However, its running time is poorly understood, and it has been constantly studied for more than 70 years (See \cite{terlaky+zhang,ShamirSurvey} for surveys). Given an LP, the Simplex method starts at an initial \emph{feasible basis}: this is an inclusion-wise minimal subset of tight constraints that define an extreme point solution $\mathbf{x}$ of the feasible region, which is a polyhedron. In each iteration, it replaces one tight constraint with another one: this is an operation called \emph{pivoting}. Geometrically, pivoting identifies a direction $\mathbf{u}$ along which one can move in from $\mathbf{x}$ to a point $(\mathbf{x} + \varepsilon \mathbf{u})$ with a better objective function value. The value of $\varepsilon$ is taken as the maximum possible value such that $(\mathbf{x} + \varepsilon \mathbf{u})$ is still a feasible solution. Whenever $\varepsilon >0$, the new point is a different extreme point solution of the polyhedron, and the pivoting operation is called \emph{non-degenerate}. However, it might be possible that $\varepsilon =0$, in which case the basis changes while the extreme point $\mathbf{x}$ stays the same: such pivoting operations are called \emph{degenerate}.  Although degenerate pivots do not change the objective function value, pivot directions chosen for degenerate pivots are still increasing with respect to the objective function.  This is due to the fact that Simplex does not know in advance whether its chosen pivot direction will result in a degenerate or non-degenerate pivot.

The sequence of non-degenerate pivots traces a path from an initial extreme point of the feasible region to an optimal one, on the so-called \emph{1-skeleton} of the underlying polyhedron $P$, i.e., the graph whose vertices are the extreme points of $P$, and whose edges are the 1-dimensional faces of $P$. This path is monotone with respect to the given objective function to be optimized, as each distinct extreme point along this path has a better objective function value than the previous one.

To prove a polynomial running time for the Simplex method, one needs to find a rule for exchanging tight inequalities (i.e., a pivot rule) whose total number of pivoting operations is polynomial and which is, itself, computable in polynomial time. Several pivot rules have been defined in the literature in the past 70+ years, but so far none of them have been proven to achieve this goal. In fact, it is not even known whether\textemdash independent of any pivot rule\textemdash a path of polynomial length between any two extreme points of a polyhedron is guaranteed to exist. This is currently referred to as the polynomial Hirsch conjecture~\cite{d4b74918-1611-3951-bc4e-72806d40b7e3}, and it is a major open question in discrete mathematics.

There are several classes of polyhedra for which the polynomial Hirsch conjecture is known to hold, and hence a short path between any pair of extreme points is guaranteed to exist. However, this does not directly translate into a polynomial pivot rule for LPs defined on such classes, as
even if one can compute such a path in polynomial time, degeneracy remains a crucial issue.
In general, to prove a polynomial running time for the Simplex method one would like to (i) prove the existence of a polynomial-length path on the 1-skeleton of the polyhedron (which is computable in polynomial time), and (ii) design a pivot rule able to \emph{follow such a path}, accounting for the possibility of having degenerate pivots. For general polyhedra, one can often assume that all pivots are non-degenerate by using standard perturbation techniques. However, as mentioned, for general polyhedra (i) is a major open problem. For some classes of polyhedra, if one would like to exploit the fact that (i) holds, then degeneracy cannot be ignored, as perturbation may break this property.

A recent result in \cite{KS24} argued that if one knows any improving feasible direction, then one can design a pivot rule that escapes degeneracy in a linear number of pivots. However, their pivot rule is not suited for bounding the number of \textit{non-degenerate} steps. With their strategy, one has no control over which path the non-degenerate pivot steps will follow. The authors indeed ask whether it is possible to identify conditions to maintain control of the non-degenerate Simplex pivot that is performed after the sequence of degenerate ones.

\noindent
\emph{Our results.}
In this work, we tackle the above issue for \emph{shadow pivot rules} (see Section~\ref{sec:preliminaries} for a formal definition).
Namely, we show that, for any shadow pivot rule, we can always reach the desired non-degenerate pivot direction in a linear number of degenerate pivots (\autoref{thm:make_shadow_edge_available} in Section~\ref{sec:shadows}) whenever the basis satisfies some natural conditions.
Note that every edge-direction \emph{is} a shadow edge-direction for some shadow. Hence, our results provide a \emph{general sufficient criterion} for being able to move along any improving edge via a sequence of linearly many degenerate Simplex pivots. Furthermore, we guarantee that given knowledge of this edge, this sequence of pivots may be computed in strongly polynomial time.
This result can be seen as a way to strengthen the approach in \cite{KS24}: it shows that one can employ a \textit{particular} feasible direction
to `limit' the number of degenerate pivots \emph{and at the same time} have control over which particular non-degenerate pivot one ends up performing.

The relevance of the above result becomes more prominent in the context of $0/1$-LPs -- LPs defined on polytopes whose vertices have $\{0,1\}$-coordinates.
In particular, in \cite{BDLKS24} the authors exhibit three pivot rules for $0/1$-LPs which guarantee that the number of non-degenerate steps is polynomial, and that the number of degenerate steps is finite. Each of these rules is either a shadow pivot rule or can be interpreted in the framework of shadow pivot rules. The unsolved issue in that work is opposite to the one of~\cite{KS24}: their pivot rules are able to follow the desired path on the 1-skeleton, but they do not know how to bound the number of degenerate pivots, leaving this as an open question. Combining our findings with their method, we answer this question for one of their pivot rules: we show how to modify it in such a way that \textit{both} the number of non-degenerate pivots is polynomial, and at each vertex the number of performed degenerate pivots is at most linear
(Theorem~\ref{thm:polynomial_total_pivots} in Section~\ref{sec:0/1}).

\section{Preliminaries}
\label{sec:preliminaries}

\paragraph{Notation and setting} We consider LPs of the form
\begin{equation}\label{lp}
\max\set{\,\bm c^\T\bm x\ :\, A \bm x=\bm b,\, D \bm x\leq \bm d,\, \bm x\in\R^n\,},
\end{equation}
where $A\in\Z^{m_A\times n}$, $D\in\Z^{m_D\times n}$, $\vecc\in \Z^n$, $\veb\in\Z^{m_A}$, and $\ved\in\Z^{m_D}$. Here, the matrix $A$ is allowed to be empty.
In other words, we study the LP $\max \set{\bm c^\T \bm x : \bm x \in P}$ where $P = \pol$ and the feasible region is a polytope of dimension $d$ (note that $d$ may be less than $n$).
The only simplifying assumptions we will make on $P$ are:
(i) $A$ has full rank, which can be made without loss of generality because redundant equalities are easy to identify, and
(ii) the objective function $\vecc$ is generic, which can be made without loss of generality by, e.g., perturbing $\vecc$ by some small noise (in fact, this is not strictly necessary but is done to simplify the analysis).

For a positive integer $a$, we let $[a]$ denote the set $\set{1,\ldots,a}$.
Given a vector $\bm z$, we use $\bm z(i)$ to denote the $i^\text{th}$ component of $\bm z$
and $\bm z(X)$ to denote the restriction of $\bm z$ to the components indexed by $X$.
Given a matrix $Z$, we use $Z_X$ to denote the restriction of $Z$ to the rows indexed by $X$.
Slightly abusing notation, we also use $Z_i$ to refer to the $i^{\text{th}}$ row of $Z$, considered as a (column) vector.

\paragraph{Bases, cones, pivots}

Given a vertex $\vex'$ of $P$, we let $T(P,\vex')$ denote the set of indices for inequalities tight at $\vex'$, that is
$T(P,\vex')= \set{i\in[m_D]: D_i^\T\vex' = \ved(i)}$.
We define the \emph{feasible cone} at $\vex'$ to be the set of all directions $\vez$ such that ${\vex'}+\varepsilon\vez\in P$ for some $\varepsilon >0$. That is, it is the set
\[
\mc(P,\vex') = \set{\vez \in\R^n: \,A\vez=\ve 0,\, D_{T(\vex')}\vez\leq \ve 0}
\]
When $P$ is clear from context, we may just write $T(\vex')$ or $\mc(\vex')$.

The extreme rays of $\mc(\vex')$ correspond to the \emph{edge-directions} at $\vex'$.
Let $\vez^1,\ldots,\vez^t$ be generators of the extreme rays of $\mc(\vex')$.
Then for each $i\in[t]$, there exists a unique vertex $\vex^{i}$ of $P$ which is adjacent to $\vex'$ in $P$ such that $\vez^{i} = \alpha(\vex^{i} - \vex')$ for some positive scalar $\alpha$. In this circumstance, we say that any positive scalar multiple of $(\vex^{i} - \vex)$ is an edge-direction at $\vex'$.

A basis $B$ of (\ref{lp}) is a set of $d'\coloneqq n-\rank(A)$ row-indices of $D$ such that $\binom{A}{D_B}$ is rank $n$. A basis $B$ is a \textit{feasible} basis if the unique element $\vex^B$ of $\set{\vex\in\R^n: A\vex = \veb, D_B\vex = \ved(B)}$ is in $P$. In this work, we do not require consideration of infeasible bases, so from here on, when we say ``basis," we mean ``feasible basis." For any basis $B$, $\vex^B$ is a vertex of $P$, and we call it the vertex of $P$ associated to $B$.
This definition of the term basis---as a collection of inequality indices rather than variable indices---contrasts with the way it is usually introduced in a typical description of the Simplex method. However, these two perspectives are completely equivalent (see e.g., \cite{BDLKS24}).

We define the \emph{basic cone} associated to a basis $B$ as
\[
\mc(P,B) = \set{ \bm z \in \R^n : \, A\bm z = \bm 0,\, D_B \bm z \leq \bm 0}.
\]
Again, when $P$ is clear from context, we may just write $\mcb$.
The extreme rays of $\mcb$ correspond to the available \textit{pivot-directions} at $B$.
Let $\vez^1,\ldots,\vez^{d'}$ be generators of the extreme rays of $\mcb$. Then for each $i\in[d']$, there exists at least one basis $B^i$ which is \textit{adjacent} to $B$ (meaning, their intersection has size $d'-1$) such that $D_{B\cap B^i}\vez^i = \veo$. In this circumstance, we say that $\vez^i$ is a \textit{pivot-direction} at $B$, and in this work we identify a choice of pivot with a choice of the corresponding pivot-direction. Note that the pivot-direction $\vez^i$ only determines which row index of $D$ will leave the basis. In the presence of degeneracy, there may be multiple choices for the row index that will enter. In particular, given a basis $B$ and a pivot-direction $\vez^i$, let $\alpha'=\max\set{\alpha:\vex^B+\alpha\vez^i\in P}$ and let $\vey = \vex^B + \alpha'\vez^i$. Then $j\in[m_D]$ is a valid choice of entering index if $j\in T(\vey)$ and $D_j^\T\vez^i > 0$. Note that $\alpha'$ may equal 0, in which case $\vey = \vex'$. In this case, we call the pivot a \textit{degenerate pivot}. If $\vey \neq \vex'$, we call the pivot a \textit{non-degenerate pivot}.

Although there may be exponentially many edge-directions at a vertex $\vex'$ of a polyhedron, there are only $d'\leq n$ pivot-directions at B. Thus, one can always efficiently enumerate the set of available pivot-directions at a basis $B$.
The Simplex method solves LPs by moving from a basis to an adjacent basis, typically by selecting pivot-directions which are $\vecc$-increasing. We therefore call a \textit{Simplex pivot} any choice of an adjacent basis such that the corresponding pivot-direction $\vez^i$ satisfies $\vecc^\T\vez^i > 0$.

\paragraph{Shadow pivot rule and shadow edges}
Given an LP $(P)$ of the form (\ref{lp}) with feasible region $P$ and a vector $\vev\in\R^n$, we can obtain a projection $\pi: \R^{n} \to \R^{2}$ by $\pi(\vex) = (\vev^\T \vex, \vecc^\T \vex)$.
Applying this projection to $P$ yields a polygon $\pi(P)$, which is called a \emph{shadow} of $P$.

\begin{defn}\label{defn:basis_shadow}
    Given an LP of the form (\ref{lp}) with feasible region $P$ and a vector $\vev\in\R^n$, we say that a basis $B$ (resp. a vertex $\vex'$) is on the $(\vev,\vecc)$-shadow of $P$ if
    \begin{enumerate}[1)]
        \item there exists a convex combination $\vew$ of $-\vev$ and $\vecc$ such that $\vew$ is a conic combination of the rows of $\binom{\pm A}{D_B}$ (resp. $\binom{\pm A}{D_{T(\vex')}}$), and
        \item whenever no direction $\vez$ in $\mcb$ (resp. $\mc(\vex')$) satisfies $\vev^\T\vez < 0$, then there is no $\vecc$-increasing direction $\vez$ in $\mcb$ (resp. $\mc(\vex')$) satisfying $\vev^\T\vez = 0$.
    \end{enumerate}
When $B$ (resp. $\vex'$) is on the $(\vev,\vecc)$-shadow of $P$, we say that also $\mcb$ (resp. $\mc(\vex')$) is on the $(\vev,\vecc)$-shadow of $P$.
\end{defn}

Condition 2) in the definition above is merely to rule out certain degenerate situations at an initial basis, and it is often a standard assumption made in the context of shadow pivot-rules. In particular, it implies that at a basis which is $(-\vev)$-maximal (in which case all directions $\vez$ in $\mcb$ satisfy $\vev^\T\vez \geq 0$), all $\vecc$-increasing directions in $\mcb$ are also $\vev$-increasing. In \autoref{lem:shadow_equivalences}, we will show that this holds at all other bases on the shadow, and the purpose of condition 2) above is to define it to hold at an initial basis.

We also note that if $B$ corresponds to $\vex'$, we have that $B\subseteq T(\vex')$. Likewise, if $\vew$ is a conic combination of the rows of $\binom{\pm A}{D_{T(\vex')}}$, then
it is a conic combination of the rows of $\binom{\pm A}{D_{B}}$ for some $d'$ linearly independent rows indexed by $B\subseteq T(\vex')$.
One can then see that a vertex $\vex'$ is on the $(\vev,\vecc)$-shadow if and only if there exists a basis $B$ on the $(\vev,\vecc)$-shadow such that $\vex' = \vex^B$.

\begin{defn}
    Given an LP $(P)$ of the form (\ref{lp}) and extreme-point solution $\vex'$ on the $(\vev,\vecc)$-shadow, the shadow edge at $\vex'$, denoted $\ves_{\vex'}$, is the edge-direction at $\vex'$ that maximizes $\frac{\vecc^\T\vez}{\vev^\T\vez}$ over all edge-directions $\vez$ at $\vex'$ satisfying $\vecc^\T\vez > 0$.
\end{defn}

We also let $R(P,\vex') = \set{i\in T(\vex') : D_i\ves_{\vex'} = 0}$ be the set of indices of inequalities tight at the shadow edge (and write $R(\vex')$ if $P$ is clear from context).

\begin{defn}
    Given an LP $(P)$ of the form (\ref{lp}) and a basis $B$ on the $(\vev,\vecc)$-shadow, the shadow pivot-direction at $B$ is the pivot-direction $\vez^j$ that maximizes $\frac{\vecc^\T\vez}{\vev^\T\vez}$ over all pivot-directions $\vez$ at $B$ satisfying $\vecc^\T\vez>0$.
\end{defn}

\section{Degenerate pivots can follow shadow edges quickly}\label{sec:shadows}

Here we consider arbitrary shadow pivot rules for arbitrary LPs. The main purpose of this section is to prove Theorem~\ref{thm:make_shadow_edge_available}.
Essentially, it states that, if we are at a basis $B$ on the $(\vev,\vecc)$-shadow which corresponds to the vertex $\vex'$, then in a sequence of at most $n$ Simplex pivots we can pivot along the shadow edge $\ves_{\vex'}$.  Recall that this is an improvement over the results of \cite{KS24} wherein the authors are are able to escape degeneracy in the same number of degenerate pivots, but cannot control the edge along which they perform a non-degenerate pivot.

\begin{thm}\label{thm:make_shadow_edge_available}
    Let $(P)$ be an LP of the form (\ref{lp}) with feasible region $P$, $B$ be a basis on the $(\vev,\vecc)$-shadow, and $\vex' = \vex^B$. Then if $\ves_{\vex'}$ is given, one can compute in strongly polynomial time a sequence of degenerate Simplex pivots from $B$ to a basis $B^*$ at which $\ves_{\vex'}$ is an available pivot direction. Furthermore, this sequence of Simplex pivots is of length $|B^*\cap R(\vex')| - |B\cap R(\vex')| \leq n - \rank(A) - 1 - |B\cap R(\vex')| $.
\end{thm}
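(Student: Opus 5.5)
Throughout, write $\ves=\ves_{\vex'}$ and $R=R(\vex')$. The plan is to show that whenever $\ves$ is not a pivot direction at the current shadow basis $B$, there is a degenerate Simplex pivot to an adjacent basis $B'$ on the $(\vev,\vecc)$-shadow with $|B'\cap R| = |B\cap R|+1$. Iterating gives the claimed length $|B^*\cap R|-|B\cap R|$. The bound $|B^*\cap R|\le d'-1$ holds because $\ves$ is a pivot direction at $B^*$ exactly when $B^*\cap R$ consists of $d'-1$ linearly independent rows, which then determine $\ves$ (together with $A$) as an extreme ray of $\mc(B^*)$. Rows in $R$ are tight both at $\vex'$ and along $\ves$; since $\ves$ spans a one-dimensional face of $\mc(\vex')$, the rows $D_R$ together with $A$ have rank $n-1$. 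Thus $|B\cap R|$ can only increase up to $d'-1$, at which point (by linear independence of basis rows) $\ves$ is an extreme ray of $\mcb$.

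For the inductive step, first consider the basic cone $\mcb\supseteq\mc(\vex')$ and use the shadow condition. By Definition~\ref{defn:basis_shadow} and \autoref{lem:shadow_equivalences}, some $\vew=\lambda\vecc-(1-\lambda)\vev$ lies in the cone generated by the rows of $\binom{\pm A}{D_B}$. Then $\vew^\T\vez\le 0$ on $\mcb$, so $\vew$ is maximized at $\vex'$. Choose $\vew$ extremal, i.e.\ $\lambda$ maximal, which is the slope of the shadow edge. Then $\vew^\T\ves=0$, and the face of $\mcb$ on which $\vew^\T\vez=0$ contains the $\vecc$-increasing pivot directions of best ratio.

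Next, write $\ves$ as a conic combination of the extreme rays $\vez^i$ of $\mcb$. Since $\vew^\T\ves=0$ and $\vew\le 0$ on the cone, only rays with $\vew^\T\vez^i=0$ appear. Every basis row $j\notin R$ has $D_j^\T\ves<0$, so some such ray $\vez^j$ (the one leaving row $j$) appears with a positive coefficient. Among the rows $j\in B\setminus R$ whose ray $\vez^j$ has $\vew^\T\vez^j=0$ and $\vecc^\T\vez^j>0$, pick the shadow pivot direction. It is $\vecc$-increasing, and its existence follows from condition 2 / the lemma together with $\vecc^\T\ves>0$.

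The pivot must be degenerate. Since $\vez^j\notin\mc(\vex')$ whenever $\vez^j\ne\ves$ (the extreme rays of $\mcb$ on the face $\vew^\perp$ that lie in $\mc(\vex')$ can only be the extremal ray $\ves$), some tight row $k\in T(\vex')\setminus B$ has $D_k^\T\vez^j>0$. I want to choose the entering index $k\in R$: that is, among the violated rows I pick one with $D_k^\T\ves=0$. This is the main obstacle, namely showing that some $k\in R$ is violated by $\vez^j$, and that afterwards the new basis remains on the shadow and $\vecc$-increasing, which is needed to keep the iteration going.

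I would argue the first part via Farkas. If all rows in $R\setminus B$ satisfied $D_k^\T\vez^j\le 0$, then $\vez^j$ would lie in the cone $\{\vez: A\vez=0,\ D_{R}\vez\le 0\}$, restricted to $\vew^\perp$. That cone is exactly the ray of $\ves$ plus directions in which a rank-$(n-1)$ face condition forces $\vez^j$ to be a multiple of $\ves$ modulo the lineality, which contradicts $\vez^j\ne\ves$. Hence the chosen $k$ exists.

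For the second part, the shadow property for $B'=B-j+k$ follows because $\vew$ remains a conic combination of the new rows. Indeed, $D_j$ has coefficient zero in $\vew$ when $\vew^\T\vez^j=0$ is used properly: the coefficient of $D_j$ in $\vew$ is $-\vew^\T\vez^j$ up to normalization.

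Computing each step requires only enumerating the $d'$ rays and the tight rows, so the whole procedure is strongly polynomial.
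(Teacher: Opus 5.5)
\textbf{There is a genuine gap, and it starts with the claim $\vew^\T\ves=0$.} Take $\vew=\lambda\vecc-(1-\lambda)\vev$ as a conic combination of the rows of $\binom{\pm A}{D_B}$ with $\lambda$ maximal. By LP duality for $(Q^B)$, the number $(1-\lambda)/\lambda$ is the optimal value of $(Q^B)$, i.e.\ the ratio $\frac{\vecc^\T\vez}{\vev^\T\vez}$ of the shadow pivot direction at $B$. It is \emph{not} the ratio of the shadow edge, which is the optimum over the smaller cone $\mc(\vex')\subseteq\mcb$.

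When $\ves$ is not an available pivot direction at $B$ (the only case where you need an inductive step), genericity of $\vecc$ makes the optimum of $(Q^B)$ a unique extreme ray different from $\ves$. Hence $\vew^\T\ves<0$, and everything built on $\vew^\T\ves=0$ fails:
\begin{enumerate}
\item The conic decomposition of $\ves$ must use rays outside $\vew^\perp$.
\item The only $\vecc$-increasing ray of $\mcb$ in $\vew^\perp$ is the unrestricted shadow pivot direction of $B$. So your candidate set is either that single ray, when its leaving index lies outside $R$, or empty, when it leaves a row of $R$. In the first case your pivot is just the ordinary shadow pivot. The second case is exactly the obstacle the paper highlights (a shadow pivot of $(P)$ may remove a constraint tight at $\ves$), and your argument produces no pivot there.
\item The invariant ``$B'$ is on the shadow of $P$'' cannot be maintained in general. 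If you pivot along some $\vez^j$ with $\vew^\T\vez^j<0$, the coefficient of $D_j$ in $\vew$ is positive and the certificate is lost. The paper notes explicitly (Section~\ref{sec:0/1}) that the bases visited may leave the shadow of $(P)$.
\end{enumerate}

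\textbf{The missing idea is to weaken the invariant to a face restriction.} With $I=B\cap R(\vex')$, maintain only that $B\setminus I$ is on the $(\vev,\vecc)$-shadow of $P_I$; \autoref{prop:restricted_shadow} gives this at the start. Then pivot along the shadow pivot direction $\vez'$ of $B\setminus I$ in $(P_I)$, i.e.\ the best-ratio $\vecc$-increasing ray among the $\vez^j$ with $j\in B\setminus I$, with no $\vew^\perp$ condition. This pivot never removes a row of $I$. Since $\ves\in\mc(P_I,B\setminus I)$ and $\vez'\neq\ves$, \autoref{cor:shadow_pivot_LP} and genericity give $\frac{\vecc^\T\vez'}{\vev^\T\vez'}>\frac{\vecc^\T\ves}{\vev^\T\ves}$. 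Finally, \autoref{lem:stay_on_shadow} applied in $(P_I)$, followed by \autoref{prop:restricted_shadow}, passes the invariant to $K=I\cup\{j\}$.

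\textbf{Your Farkas step for the entering index is also not valid as written.} The cone $\{\vez: A\vez=\veo,\ D_R\vez\le\veo\}$ has lineality space spanned by $\ves$, but it is in general full-dimensional in $\ker A$. So lying in it does not force $\vez^j$ to be a multiple of $\ves$. What makes the step work is the strict ratio inequality, used via \autoref{lem:violate_edge_constraint}. Equivalently, you can use the dual certificate of $(Q^{\vex'})$: by complementary slackness it is supported on $\pm A$ and $D_R$ only, so every direction in $\ker A$ satisfying the rows of $R$ has ratio at most that of $\ves$.

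Your rank argument for the final count is fine and matches the paper.
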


We remark that it is important that our basis $B$ is on the $(\vev,\vecc)$-shadow and
that $\ves_{\vex'}$ is known.
Before giving a proof, we need a few lemmas.

\paragraph{Part I: Properties of bases on shadows} We start with a set of lemmas reproducing some results in \cite{BDLKS24}, which we have to reinterpret in the context of shadow-pivot rules. 

Lemma~\ref{lem:shadow_equivalences} gives a convenient equivalent condition to being on the shadow.

\begin{lem}\label{lem:shadow_equivalences}[from \cite{BDLKS24}]
    Let $(P)$ be an LP of the form (\ref{lp}) with feasible region $P$ and with generic objective function $\vecc$. Let $C$ be either
    (i) a basic cone for some basis $B$ which is not $\vecc$-maximal, or
     (ii) a feasible cone for some extreme point solution $\vex'$ of $P$ which is not $\vecc$-maximal.
    Then the following are equivalent:
    \begin{enumerate}[(A)]
        \item $C$ is on the $(\vev,\vecc)$-shadow.
        \item For all $\vez\in C$ such that $\vecc^\T\vez > 0$, we have that $\vev^\T\vez > 0$.
    \end{enumerate}
\end{lem}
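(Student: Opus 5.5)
The plan is to prove the two implications separately. Both rest on LP duality / Farkas' lemma applied to the cone $C=\set{\vez: A\vez=\veo,\ D_S\vez\le\veo}$, where $S=B$ or $S=T(\vex')$. The dual cone of $C$ is exactly the set of conic combinations of the rows of $\binom{\pm A}{D_S}$. So condition 1) of Definition~\ref{defn:basis_shadow} says that some $\vew=\lambda\vecc-(1-\lambda)\vev$ with $\lambda\in[0,1]$ lies in this dual cone. Equivalently, $\vew^\T\vez\le 0$ for all $\vez\in C$.

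\emph{(A)$\Rightarrow$(B).} Take $\vez\in C$ with $\vecc^\T\vez>0$. From $\vew^\T\vez\le0$ we get $\lambda\vecc^\T\vez\le(1-\lambda)\vev^\T\vez$.
\begin{itemize}
\item If $\lambda>0$, the left side is positive, so $(1-\lambda)\vev^\T\vez>0$. This forces $\lambda<1$ and $\vev^\T\vez>0$.
\item If $\lambda=0$, then $\vew=-\vev$ and we only get $\vev^\T\vez\ge0$ for all $\vez\in C$. In that case no direction of $C$ has $\vev^\T\vez<0$, so condition 2) applies. It says no $\vecc$-increasing $\vez\in C$ has $\vev^\T\vez=0$, hence $\vev^\T\vez>0$.
\end{itemize}
(For $\lambda=1$, $\vecc$ would be in the dual cone, making $C$ $\vecc$-maximal, which is excluded.)

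\emph{(B)$\Rightarrow$(A).} Condition 2) is immediate from (B), since (B) says every $\vecc$-increasing $\vez\in C$ has $\vev^\T\vez>0\neq0$. For condition 1), we must find $\lambda\in[0,1]$ with $\lambda\vecc^\T\vez\le(1-\lambda)\vev^\T\vez$ for all $\vez\in C$. Consider the image $\pi(C)=\set{(\vev^\T\vez,\vecc^\T\vez):\vez\in C}$, a polyhedral cone in $\R^2$. By (B) it avoids the closed quadrant $\set{(a,b): a\le0,\ b>0}$. Since $C$ is not $\vecc$-maximal, $\pi(C)$ meets the open upper half-plane. A polyhedral (closed, convex) cone in $\R^2$ that misses $\set{a\le 0, b>0}$ is contained in a closed half-plane $\set{(a,b): \lambda b\le(1-\lambda)a}$ for some $\lambda\in[0,1)$. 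This holds because $\pi(C)$ and the convex cone $\set{a< 0,b>0}$ are disjoint convex cones and can be separated by a line through the origin. One then checks that the separating normal can be taken of the form $(-(1-\lambda),\lambda)$ with nonnegative entries, using that the quadrant lies on the other side. Pulling back gives $\vew^\T\vez\le0$ on $C$, and Farkas' lemma puts $\vew$ in the dual cone.

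\emph{Main obstacle.} The delicate step is the 2D separation. The separating functional $(\alpha,\beta)$ must satisfy $\alpha a+\beta b\le0$ on $\pi(C)$ and $\ge0$ on the open quadrant $\set{a<0,b>0}$. Testing the points $(-1,\varepsilon)$ and $(-\varepsilon,1)$ with $\varepsilon\to0$ gives $\alpha\le0\le\beta$, so after normalizing to $\beta-\alpha=1$ we obtain the desired convex combination. The separation is nontrivial because $\pi(C)\neq\R^2$: if it were all of $\R^2$, it would contain points with $a\le 0<b$. Genericity of $\vecc$ is used only implicitly, to rule out $C$ having a lineality issue with $\vecc^\T\vez=0$ edges. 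I would handle the boundary case $\lambda=0$ carefully, as in the forward direction, since that is exactly where condition 2) is needed.
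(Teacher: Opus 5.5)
Your proof is correct and complete. The paper gives no proof of this lemma; it imports it from \cite{BDLKS24}. So your Farkas-plus-planar-separation argument stands as a self-contained proof rather than a variant of an in-paper one.

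Both directions check out. In (A)$\Rightarrow$(B), the case $\lambda=0$ is exactly where condition 2) is needed. In (B)$\Rightarrow$(A), separating the disjoint convex cones $\pi(C)$ and $\{a<0,\,b>0\}$ gives a functional through the origin. You correctly extract the signs $\alpha\le 0\le\beta$ from it, which yields $\vew=\lambda\vecc-(1-\lambda)\vev$ in the polar cone.

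A few remarks:
\begin{itemize}
\item Your closing comment about genericity is unfounded. Nothing in your argument uses that $\vecc$ is generic, and the equivalence does not need it.
\item Likewise, non-$\vecc$-maximality is not needed for the equivalence; in your proof it only yields $\lambda<1$, which the definition does not require.
\item The observation $\pi(C)\neq\R^2$ is superfluous, since disjointness from a nonempty convex set already suffices for separation.
\end{itemize}

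A slightly more direct route for (B)$\Rightarrow$(A), closer in spirit to the shadow LPs $(Q^{\vex'})$ used later in the paper, is LP duality. By (B), the LP $\max\{\vecc^\T\vez : \vev^\T\vez\le 1,\ \vez\in C\}$ is bounded, because its recession cone $C\cap\{\vez:\vev^\T\vez\le 0\}$ contains no $\vecc$-increasing direction. Its dual then yields $\mu\ge 0$ with $\vecc-\mu\vev$ a conic combination of the rows of $\binom{\pm A}{D_S}$. Hence $\vew=(\vecc-\mu\vev)/(1+\mu)$ is the required convex combination of $\vecc$ and $-\vev$. This has the same Farkas content as your argument but avoids the limiting argument with test points.
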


Next lemma establishes a nice characterization of shadow edges.

\begin{lem}\label{lem:shadow_edge_LP}[from \cite{BDLKS24}]
    Let $(P)$ be an LP of the form (\ref{lp}) and let $\vex'$ be a vertex on the $(\vev,\vecc)$-shadow. The shadow edge at $\vex'$ is an optimal solution of the LP $(Q^{\vex'})$:
    \begin{equation}
    	\max \{ \vecc^\T \vez : \vev^\T\vez = 1, \; \vez \in \mc(\vex')\} \tag{$Q^{\vex'}$}\label{eq:shadow_edge_lp}
    \end{equation}
\end{lem}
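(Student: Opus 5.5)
The plan is to read Lemma~\ref{lem:shadow_edge_LP} as saying that the scaled edge-direction $\ves_{\vex'}/(\vev^\T\ves_{\vex'})$ is optimal for $(Q^{\vex'})$. I would prove this by showing that $(Q^{\vex'})$ has an optimum and that some optimum lies on an extreme ray of $\mc(\vex')$. Throughout I assume $\vex'$ is not $\vecc$-maximal, since otherwise there is no $\vecc$-increasing edge-direction and $\ves_{\vex'}$ is undefined. Lemma~\ref{lem:shadow_equivalences}(ii) then applies to $C=\mc(\vex')$: since $\vex'$ is on the $(\vev,\vecc)$-shadow, every $\vez\in\mc(\vex')$ with $\vecc^\T\vez>0$ satisfies $\vev^\T\vez>0$.

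\emph{Step 1: feasibility and a positive value.} Because $\vex'$ is not $\vecc$-maximal, there is $\vez\in\mc(\vex')$ with $\vecc^\T\vez>0$. By (B), $\vev^\T\vez>0$. Rescaling $\vez$ to satisfy $\vev^\T\vez=1$ gives a feasible point of $(Q^{\vex'})$ with positive objective. Hence the optimal value, if attained, is positive.

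\emph{Step 2: boundedness.} The feasible region $F=\mc(\vex')\cap\{\vev^\T\vez=1\}$ is a nonempty polyhedron. Its recession cone is $\mc(\vex')\cap\{\vev^\T\vez=0\}$. If $(Q^{\vex'})$ were unbounded, there would be a recession direction $\vez$ with $\vev^\T\vez=0$ and $\vecc^\T\vez>0$. This contradicts (B), so $(Q^{\vex'})$ attains its optimum.

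\emph{Step 3: an optimum on an extreme ray.} Since $P$ is a polytope, the cone $\mc(\vex')$ is pointed, with extreme rays generated by the edge-directions $\vez^1,\ldots,\vez^t$. Then $F$ is pointed as well, so the optimum is attained at a vertex $\bar\vez$ of $F$. The minimal face of $\mc(\vex')$ containing $\bar\vez$, intersected with the hyperplane $\{\vev^\T\vez=1\}$, must be the single point $\bar\vez$. That face therefore has dimension one, i.e.\ it is an extreme ray, so $\bar\vez=\vez^i/(\vev^\T\vez^i)$ for some $i$ with $\vev^\T\vez^i>0$.

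\emph{Step 4: identify the optimal ray.} By Step 1 the optimal value is positive, so $\vecc^\T\vez^i>0$. The objective value at any point $\vez^j/(\vev^\T\vez^j)$ with $\vev^\T\vez^j>0$ is exactly $\vecc^\T\vez^j/\vev^\T\vez^j$. So maximizing $(Q^{\vex'})$ over these vertex candidates means maximizing this ratio over $\vecc$-increasing edge-directions. Every $\vecc$-increasing edge-direction has $\vev^\T\vez^j>0$ by (B), so these candidates include all of them. The ratio is maximized by $\ves_{\vex'}$ by definition, so $\ves_{\vex'}/(\vev^\T\ves_{\vex'})$ attains the optimal value.

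The main obstacle is Step 2. Without (B), a $\vecc$-increasing direction orthogonal to $\vev$ would make $(Q^{\vex'})$ unbounded. This is exactly where Lemma~\ref{lem:shadow_equivalences}, and behind it condition 2) of Definition~\ref{defn:basis_shadow}, is needed. Step 3 is the standard fact that vertices of a hyperplane section of a pointed cone lie on its extreme rays, and I would only sketch it.
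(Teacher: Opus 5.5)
Your proof is correct. The paper itself does not prove this lemma; it imports it from \cite{BDLKS24}, so there is no in-paper argument to compare against. What you give is the standard self-contained argument, in three parts:
\begin{itemize}
\item Condition (B) of \autoref{lem:shadow_equivalences} makes $(Q^{\vex'})$ feasible with positive value.
\item The same condition rules out any $\vecc$-improving recession direction in $\mc(\vex')\cap\{\vev^\T\vez=0\}$, so the LP attains its optimum.
\item Vertices of the hyperplane section of the pointed cone $\mc(\vex')$ are normalized extreme rays, and the objective at $\vez^j/(\vev^\T\vez^j)$ is exactly the ratio that defines $\ves_{\vex'}$.
\end{itemize}

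Two small remarks. First, pointedness of $\mc(\vex')$ does not come from $P$ being a polytope. It holds at any vertex of any polyhedron, because the lineality space $\{\vez : A\vez=\veo,\ D_{T(\vex')}\vez=\veo\}$ is trivial when $\vex'$ is a vertex. The polytope hypothesis is only what lets you identify the extreme rays of $\mc(\vex')$ with edge-directions toward adjacent vertices.

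Second, the implication (A)$\Rightarrow$(B) that you invoke follows directly from Definition~\ref{defn:basis_shadow}, so relying on it involves no circularity. Suppose $\vew=\lambda(-\vev)+(1-\lambda)\vecc$ is a conic combination of the tight rows; then $\vew^\T\vez\le 0$ on $\mc(\vex')$. For $\lambda\in(0,1)$ this gives (B) immediately. The case $\lambda=1$ is exactly what the second condition of that definition handles. The case $\lambda=0$ would make $\vex'$ $\vecc$-maximal, which you have excluded.
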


Applying Lemma~\ref{lem:shadow_edge_LP} to the LP obtained from $(P)$ by removing all inequalities indexed by $T(\vex^B)\setminus B$,
we get an analogous statement for bases as a corollary.

\begin{cor}\label{cor:shadow_pivot_LP}
    Let $(P)$ be an LP of the form (\ref{lp}) and $B$ be a basis on the $(\vev,\vecc)$-shadow. The shadow pivot direction at $B$ is an optimal solution of the LP $(Q^B)$:
        \begin{equation}
		\max \{ \vecc^\T \vez : \vev^\T\vez = 1, \; \vez\in \mc(B)\} \tag{$Q^{B}$}\label{eq:shadow_pivot_lp}
    \end{equation}
\end{cor}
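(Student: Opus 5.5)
The plan is to reduce the corollary to Lemma~\ref{lem:shadow_edge_LP} by building an auxiliary LP in which the basis $B$ becomes a nondegenerate vertex. Let $\vex' = \vex^B$ and let $P'$ be obtained from $P$ by deleting every inequality indexed by $T(\vex')\setminus B$. The set $P'$ may be unbounded, while the earlier results are stated for polytopes. To handle this, I would intersect $P'$ with a large box $\set{\vex : -M\le \vex(i)\le M}$, choosing $M$ large enough that no box constraint is tight at $\vex'$. Call the result $P''$. It is a polytope given by a system of the form (\ref{lp}) with the same equalities $A\vex=\veb$. The vertex $\vex'$ satisfies all inequalities of $P$, so $\vex'\in P''$, and since $\binom{A}{D_B}$ has rank $n$, $\vex'$ is a vertex of $P''$. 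Moreover $T(P'',\vex') = B$ exactly, because every inequality of $P''$ other than those in $B$ is strictly satisfied at $\vex'$.

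Next I would record the consequences. First, $\mc(P'',\vex') = \set{\vez : A\vez=\veo,\ D_B\vez\le\veo} = \mc(P,B)$. Since $|B| = d' = n-\rank(A)$ and the rows $D_B$ are independent modulo the row space of $A$, this cone is simplicial with exactly $d'$ extreme rays. These extreme rays are the edge-directions of $P''$ at $\vex'$ and, by definition, also the pivot-directions of $P$ at $B$; so the two sets coincide.

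Second, $\vex'$ is on the $(\vev,\vecc)$-shadow of $P''$. Both conditions in Definition~\ref{defn:basis_shadow} depend only on the rows $\binom{\pm A}{D_{T(P'',\vex')}} = \binom{\pm A}{D_B}$ and on the cone $\mc(P'',\vex')=\mc(P,B)$. These are exactly the data witnessing that $B$ is on the shadow of $P$. The objective $\vecc$ is unchanged, so genericity is preserved as far as the lemma needs it.

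Now apply Lemma~\ref{lem:shadow_edge_LP} to $P''$ and $\vex'$: the shadow edge $\ves$ of $P''$ at $\vex'$ is optimal for $\max\set{\vecc^\T\vez : \vev^\T\vez=1,\ \vez\in\mc(P'',\vex')}$, which is literally $(Q^B)$. By definition, $\ves$ maximizes $\vecc^\T\vez/\vev^\T\vez$ over $\vecc$-increasing edge-directions of $P''$ at $\vex'$. By the identification above, these are precisely the $\vecc$-increasing pivot-directions at $B$, so $\ves$ is (a positive multiple of) the shadow pivot-direction at $B$.

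It remains to check the normalization $\vev^\T\vez=1$. If $B$ is not $\vecc$-maximal, Lemma~\ref{lem:shadow_equivalences} gives $\vev^\T\vez>0$ for every $\vecc$-increasing $\vez\in\mcb$, so the shadow pivot-direction can be scaled positively to satisfy $\vev^\T\vez=1$. This scaling preserves both the ratio and membership in the cone.

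The only point needing care is that the lemma's hypotheses survive the modification: $P''$ must be a genuine polytope of the required form, with $\vex'$ a vertex on its shadow. The box trick and the observation that the shadow conditions are local to the tight rows handle this. Everything else is bookkeeping.
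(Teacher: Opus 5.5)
Your proposal is correct and follows the paper's route. The paper obtains the corollary by applying Lemma~\ref{lem:shadow_edge_LP} to the LP obtained from $(P)$ by deleting the inequalities indexed by $T(\vex^B)\setminus B$. This makes $B$ exactly the tight set at $\vex^B$, so $\mc(\vex^B)=\mcb$ and the edge-directions coincide with the pivot-directions. Your box constraints (to keep the region a polytope), your check that both shadow conditions transfer because they involve only $\binom{\pm A}{D_B}$ and $\mcb$, and the normalization step make explicit details the paper leaves implicit.
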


The following lemma states a well-known fact about shadow pivot rules.

\begin{lem}\label{lem:stay_on_shadow}[from \cite{BDLKS24}]
Let $(P)$ be an LP of the form (\ref{lp}) and $B$ be a basis on the $(\vev,\vecc)$-shadow which is not $\vecc$-maximal. Let $B'$ be the basis obtained by performing a pivot along the shadow-pivot direction at $B$. Then $B'$ is on the $(\vev,\vecc)$-shadow.
\end{lem}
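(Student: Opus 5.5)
The plan is to verify the two conditions of Definition~\ref{defn:basis_shadow} for $B'$ directly, using LP duality applied to the program $(Q^B)$ of Corollary~\ref{cor:shadow_pivot_LP}. Let $\vez^*$ be the shadow pivot-direction at $B$, and let $\ell\in B$ be the leaving index, so that $B'=(B\setminus\{\ell\})\cup\{j\}$ and $D_{B\cap B'}\vez^*=\veo$. Since $B$ is not $\vecc$-maximal, some pivot-direction at $B$ is $\vecc$-increasing. Because $B$ is on the shadow, Lemma~\ref{lem:shadow_equivalences} then gives $\vev^\T\vez>0$ for every $\vecc$-increasing $\vez\in\mcb$. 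Hence we may rescale $\vez^*$ so that $\vev^\T\vez^*=1$. By Corollary~\ref{cor:shadow_pivot_LP}, $\vez^*$ is then optimal for $(Q^B)$, with optimal value $\mu\coloneqq\vecc^\T\vez^*>0$.

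\emph{Condition 1).} The dual of $(Q^B)$ is $\min\{\mu' : \vecc-\mu'\vev = D_B^\T\vey + A^\T\bm u,\ \vey\geq\veo\}$. Strong duality yields a dual optimal solution $(\mu,\vey,\bm u)$ with the same value $\mu$. Complementary slackness forces $\vey(i)=0$ whenever $D_i^\T\vez^*<0$. In particular $\vey(\ell)=0$, because $\vez^*$ is the extreme ray of $\mcb$ that leaves facet $\ell$, so $D_\ell^\T\vez^*<0$. Therefore $\vecc+\mu(-\vev)$ is a conic combination of the rows of $\binom{\pm A}{D_{B\cap B'}}$, and hence also of the rows of $\binom{\pm A}{D_{B'}}$. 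Since $\mu>0$, the vector $\vew=\frac{1}{1+\mu}\vecc+\frac{\mu}{1+\mu}(-\vev)$ is a convex combination of $-\vev$ and $\vecc$ with the required property.

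\emph{Condition 2).} This condition is vacuous for $B'$, since $\mc(B')$ contains a direction with $\vev^\T\vez<0$; the candidate is $-\vez^*$. First, $D_{B\cap B'}(-\vez^*)=\veo$. Second, the entering index $j$ satisfies $D_j^\T\vez^*>0$ by the definition of a valid entering index, so $D_j^\T(-\vez^*)<0$. Third, $A(-\vez^*)=\veo$. Together these give $-\vez^*\in\mc(B')$, and $\vev^\T(-\vez^*)=-1<0$.

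The main point needing care is the complementary slackness step: one must make sure the dual multiplier on the leaving constraint vanishes, which is exactly what transfers the certificate from $B$ to $B'$. This requires $D_\ell^\T\vez^*<0$ strictly, which holds because $\vez^*$ generates the extreme ray of $\mcb$ determined by the $d'-1$ equalities in $B\setminus\{\ell\}$. Everything else is routine bookkeeping.
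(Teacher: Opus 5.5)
Your proof is correct. The paper does not actually prove this lemma: it imports it from \cite{BDLKS24}, so there is no in-text argument to match. What you give is a self-contained derivation from Lemma~\ref{lem:shadow_equivalences} and Corollary~\ref{cor:shadow_pivot_LP}, both of which the paper states before this lemma.

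\textbf{What your certificate encodes.} The dual of $(Q^B)$ computes the smallest $t$ such that $\vecc-t\vev$ lies in the cone generated by the rows of $\binom{\pm A}{D_B}$. This is the breakpoint of the classical parametric-objective argument for the shadow-vertex rule, as the objective slides from $-\vev$ toward $\vecc$. Your step $\vey(\ell)=0$ is the leaving row's multiplier reaching zero at that breakpoint, which is exactly why the same objective also certifies $B'$.

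\textbf{What your route buys.} First, you verify conditions 1) and 2) of Definition~\ref{defn:basis_shadow} directly, rather than checking property (B) of Lemma~\ref{lem:shadow_equivalences} for $B'$. This needs no assumption that $B'$ is non-$\vecc$-maximal, so the final pivot requires no separate case. Second, you use only $D_j^\T\vez^*>0$ for the entering index, so the argument covers every valid entering index in the degenerate case. That is precisely what Lemma~\ref{lem:add_shadow_constraint} requires, since it invokes this lemma in $(P_I)$ with a specifically chosen $j\in R(\vex')\setminus I$.
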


\paragraph{Part II: Escaping degeneracy}

We can now proceed with the novel contributions of this section.
An immediate consequence of \autoref{lem:shadow_edge_LP} is that any $\vecc$-increasing direction $\vez'$ with $\frac{\vecc^\T\vez'}{\vev^\T\vez'}> \frac{\vecc^\T\ves_{\vex'}}{\vev^\T\ves_{\vex'}}$ is necessarily an infeasible direction at $\vex'$. In the next key lemma, we show that in fact, any such direction $\vez'$ violates at least one of the constraints which is kept tight by $\ves_{\vex'}$.

\begin{lem}\label{lem:violate_edge_constraint}
Let $(P)$ be an LP of the form (\ref{lp}) and $\vex'$ be a vertex on the $(\vev,\vecc)$-shadow which is not a $\vecc$-maximizer. Let $\vez'$ be any vector such that $A \vez'=0$, $\vecc^\T\vez' > 0$ and $\frac{\vecc^\T\vez'}{\vev^\T\vez'} > \frac{\vecc^\T\ves_{\vex'}}{\vev^\T\ves_{\vex'}}$. Then there exists $j\in R(\vex')$ such that $D_j^\T\vez' > 0$.
\end{lem}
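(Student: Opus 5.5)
We argue by contradiction: perturb the shadow edge $\ves_{\vex'}$ slightly in the direction $\vez'$ and show that this produces a feasible direction at $\vex'$ whose ratio $\frac{\vecc^\T\vez}{\vev^\T\vez}$ beats that of $\ves_{\vex'}$. This contradicts the optimality of $\ves_{\vex'}$ for $(Q^{\vex'})$ given by Lemma~\ref{lem:shadow_edge_LP}.

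First we fix the sign information at the shadow edge. Write $\ves=\ves_{\vex'}$. Since $\ves\in\mc(\vex')$ and $\vecc^\T\ves>0$, Lemma~\ref{lem:shadow_equivalences} (case (ii), applicable because $\vex'$ is on the shadow and not $\vecc$-maximal) gives $\vev^\T\ves>0$. Hence $r\coloneqq \frac{\vecc^\T\ves}{\vev^\T\ves}>0$. Moreover, $\ves/(\vev^\T\ves)$ is optimal for $(Q^{\vex'})$ with value $r$. Thus every $\vey\in\mc(\vex')$ with $\vev^\T\vey>0$ satisfies $\vecc^\T\vey\le r\,\vev^\T\vey$, by rescaling $\vey$ to satisfy $\vev^\T\vey=1$.

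Now suppose, for contradiction, that $D_j^\T\vez'\le 0$ for all $j\in R(\vex')$. Set $\vey=\ves+\varepsilon\vez'$ for a small $\varepsilon>0$, and check that $\vey\in\mc(\vex')$:
\begin{itemize}
\item We have $A\vey=\veo$, since $A\ves=A\vez'=\veo$.
\item For $j\in R(\vex')$ we have $D_j^\T\vey=\varepsilon D_j^\T\vez'\le 0$.
\item For $j\in T(\vex')\setminus R(\vex')$ we have $D_j^\T\ves<0$ strictly, by the definition of $R(\vex')$ and because $\ves\in\mc(\vex')$. Since there are finitely many such $j$, a small enough $\varepsilon$ keeps $D_j^\T\vey<0$.
\end{itemize}
Also $\vecc^\T\vey=\vecc^\T\ves+\varepsilon\vecc^\T\vez'>0$, so Lemma~\ref{lem:shadow_equivalences} again yields $\vev^\T\vey>0$.

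Finally we compare ratios. We have $\vecc^\T\vey-r\,\vev^\T\vey=\varepsilon(\vecc^\T\vez'-r\,\vev^\T\vez')$, because $\vecc^\T\ves=r\,\vev^\T\ves$. We claim $\vecc^\T\vez'>r\,\vev^\T\vez'$, and we split on the sign of $\vev^\T\vez'$:
\begin{itemize}
\item If $\vev^\T\vez'>0$, the claim is exactly the hypothesis $\frac{\vecc^\T\vez'}{\vev^\T\vez'}>r$.
\item If $\vev^\T\vez'\le 0$, the claim holds trivially, since $\vecc^\T\vez'>0\ge r\,\vev^\T\vez'$ using $r>0$.
\end{itemize}
Hence $\vecc^\T\vey>r\,\vev^\T\vey$ with $\vey\in\mc(\vex')$ and $\vev^\T\vey>0$, contradicting the bound from the first step. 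Therefore some $j\in R(\vex')$ has $D_j^\T\vez'>0$.

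The main subtlety is the sign bookkeeping: both $\vev^\T\ves>0$ and $\vev^\T\vey>0$ are needed so that the ratio comparison is legitimate, and the case $\vev^\T\vez'\le 0$, where the hypothesis on ratios is vacuous or ill-signed, must be handled separately as above.
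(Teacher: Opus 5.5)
Your proof is correct and follows essentially the paper's argument. Your $\ves+\varepsilon\vez'$ is a positive rescaling of the paper's convex combination $(1-\lambda)\vez'+\lambda\ves$ with $\lambda$ near $1$, and both routes contradict the optimality of $\ves_{\vex'}$ for $(Q^{\vex'})$ from Lemma~\ref{lem:shadow_edge_LP}; your separate handling of the case $\vev^\T\vez'\le 0$ just makes explicit what the paper covers with its one-line remark that $\vev^\T\vez'>0$.
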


\begin{proof}
Let $\ves\coloneqq \ves_{\vex'}$. Suppose for the sake of contradiction that there exists $\vez'\in\R^n$ such that $\vecc^\T\vez' > 0$, $\frac{\vecc^\T\vez'}{\vev^\T\vez'} > \frac{\vecc^\T\ves}{\vev^\T\ves}$, but $D_j^\T\vez' \leq 0$ for all $j\in R(\vex')$.
Note that by \autoref{lem:shadow_equivalences},
$\vev^\T\ves > 0$, and therefore $\vev^\T\vez' > 0$.
Assume without loss of generality that $\vecc^\T\vez' = \vecc^\T\ves = 1$. We have that $D^\T_j\ves = 0$ for all $j\in R(\vex')$ and $D^\T_l \ves < 0$ for all $l\in T(\vex')\setminus R(\vex')$. Let $\vew = (1-\lambda)\vez' + \lambda \ves$ for $\lambda\in (0,1)$ chosen so that for all $l\in T(\vex')\setminus R(\vex')$, $D^\T_l\vew < 0$. Note that this holds for any $\lambda$ sufficiently close to $1$.
Then we have that $\vecc^\T\vez' = \vecc^\T\vew = \vecc^\T\ves$ and $\vev^\T\vez' < \vev^\T\vew < \vev^\T\ves$, and therefore
$
\frac{\vecc^\T\vew}{\vev^\T\vew} > \frac{\vecc^\T\ves}{\vev^\T\ves}.
$
In particular, this implies by \autoref{lem:shadow_edge_LP} that $\vew\notin \mc(\vex')$, and so $\vew$ violates at least one inequality $D_k^\T\vez \leq 0$ from the system $D_{T(\vex')}\vez\leq \veo$. As above, $k\notin T(\vex')\setminus R(\vex')$, and so we have that $k\in R(\vex')$.
On the other hand, we have by assumption that $D_k^\T\vez' \leq 0$. Then since $D_k^\T\ves= 0$ and $\vew$ is a convex combination of $\vez'$ and $\ves$, we have that $D_k^\T\vew \leq 0$, a contradiction.
\end{proof}

\autoref{lem:violate_edge_constraint} implies that, by performing shadow pivots, we may at each step add some constraint which is tight at the shadow edge, providing hope that we might escape degeneracy along the shadow edge in at most $n$ degenerate Simplex pivots. However, since it is possible for edges to be over-defined by non-redundant inequalities, in principle it could happen that a shadow pivot also removes some constraint which is tight at the shadow edge.
To deal with this, we prove \autoref{lem:add_shadow_constraint} which is inspired by the strategy of \cite{KS24}. It shows that if one performs shadow pivots on a sequence of \textit{face-restrictions} of our LP, one can guarantee that when a constraint tight at the shadow edge is added, we are allowed to \emph{never} remove it.

Formally, for an LP $(P)$ of the form (\ref{lp}) and any $I\subseteq[m_D]$, we define
the \emph{face-restricted} LP $(P_I)$ to be the LP obtained from $(P)$ by requiring the inequalities indexed by $I$ to be satisfied with equality.
That is, $(P_I):= \max\set{\vecc^\T\vex : \vex\in P_I}$, where $P_I := \set{\vex\in\R^n: A\vex = \veb, D_I\vex = \ved(I), D_{[m_D]\setminus I}\vex \leq \ved([m_D]\setminus I)}$.

With this notation, we first argue that if a basis is on the shadow, any subset of that basis is a basis on the shadow of a face of $P$.

\begin{prop}\label{prop:restricted_shadow}
    Let $(P)$ be an LP of the form (\ref{lp}), $B$ be a basis on the $(\vev,\vecc)$-shadow, and let $X\subseteq B$. Then $B\setminus X$ is on the $(\vev,\vecc)$-shadow of $P_X$.
\end{prop}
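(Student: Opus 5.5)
The plan is to verify directly that $B\setminus X$ satisfies the two conditions of Definition~\ref{defn:basis_shadow} for the LP $(P_X)$. First I will check that $B\setminus X$ really is a (feasible) basis of $P_X$. In $(P_X)$ the equality system is $\binom{A}{D_X}$, and its rank is $\rank(A)+|X|$, since the rows of $D_B$ are linearly independent modulo the row space of $A$. So $n-\rank\binom{A}{D_X}=|B\setminus X|$. Moreover $\binom{A}{D_X}$ together with $D_{B\setminus X}$ spans the same rows as $\binom{A}{D_B}$, so this matrix has rank $n$. The associated point is again $\vex^B$, which lies in $P_X$ because the constraints in $X\subseteq B$ are tight at $\vex^B$. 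If $\binom{A}{D_X}$ happens not to have full row rank, that is harmless: the definition only uses the row space and the kernel.

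The key observation is that the basic cone does not change. Indeed,
\[
\mathcal{C}(P_X,B\setminus X)=\set{\vez: A\vez=\veo,\ D_X\vez=\veo,\ D_{B\setminus X}\vez\le\veo},
\]
which is contained in $\mcb$. Conversely, the cone of the constraint rows, generated by the rows of $\binom{\pm A}{\pm D_X}$ and $D_{B\setminus X}$, contains the cone generated by the rows of $\binom{\pm A}{D_B}$.

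Condition 1) then follows immediately. The vector $\vew$ certifying that $B$ is on the shadow is a conic combination of the rows of $\binom{\pm A}{D_B}$. It is therefore also a conic combination of the rows of $\pm A$, $\pm D_X$ and $D_{B\setminus X}$, which are exactly the rows for $P_X$, since the equality rows $D_X$ enter with both signs.

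For condition 2), the cleanest route is via Lemma~\ref{lem:shadow_equivalences}. If the restricted cone $C'=\mathcal{C}(P_X,B\setminus X)$ is not $\vecc$-maximal, then every $\vez\in C'$ with $\vecc^\T\vez>0$ lies in $\mcb$. If $B$ is not $\vecc$-maximal, (B) for $\mcb$ gives $\vev^\T\vez>0$. Hence (B) holds for $C'$, and the lemma yields that $C'$ is on the shadow.

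The main obstacle is the edge cases where Lemma~\ref{lem:shadow_equivalences} does not apply, namely when $B$ or $B\setminus X$ is $\vecc$-maximal. If $C'$ has no $\vecc$-increasing direction, condition 2) holds vacuously, since its conclusion asks for no $\vecc$-increasing direction. If $C'$ has one, then $\mcb$ does too, so $B$ is not $\vecc$-maximal and the argument above applies.

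One more point to check is the effect of the face restriction on the rank of the equality system, i.e.\ that $P_X$ is again of the form (\ref{lp}) with $A$ of full rank after discarding redundant rows. This is only bookkeeping and does not change the cones.
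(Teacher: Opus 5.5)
Your proposal is correct, and your argument for condition 1) is exactly the paper's proof: the certificate $\vew$ for $B$ stays a conic combination once the rows of $D_X$ may be used with both signs. The paper's proof stops there.

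Your check that $B\setminus X$ is a feasible basis of $P_X$, and your derivation of condition 2), therefore fill in steps the paper leaves implicit. For condition 2) you apply Lemma~\ref{lem:shadow_equivalences} to $\mcb\supseteq\mathcal{C}(P_X,B\setminus X)$, and the $\vecc$-maximal case is vacuous. Routing through property (B) is the right move, because condition 2) cannot be transferred verbatim from $\mcb$. Its hypothesis (no $\vez$ with $\vev^\T\vez<0$) is weaker on the smaller cone, so you genuinely need condition 1) for $B$ as well.

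Your hedge about the rank of $\binom{A}{D_X}$ is unnecessary. That matrix automatically has full row rank, since its rows are a subset of the linearly independent rows of the $n\times n$ matrix $\binom{A}{D_B}$.
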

\begin{proof}
    Since $B$ is on the $(\vev,\vecc)$-shadow of $P$, by definition there exists a convex combination $\vew$ of $-\vev$ and $\vecc$ such that $\vew$ is a conic combination of the rows of $\pm A$ and $D_B$. Then $\vew$ is also a conic combination of the rows of
    $\pm A$, $\pm D_X$, and $D_{B\setminus X}$.
    Therefore, $B\setminus X$ is on the $(\vev,\vecc)$-shadow of $P_X$.
\end{proof}

We will use face restricted LPs to give us a measure of progress that we will later use to bound the number of degenerate steps in the original LP.

\begin{lem}\label{lem:add_shadow_constraint}
    Let $(P)$ be an LP of the form $(\ref{lp})$, $\vex'$ be a vertex on the $(\vev,\vecc)$-shadow, and $B$ be a basis corresponding to $\vex'$. Let $I = B\cap R(\vex')$. Assume $B\setminus I$ is on the $(\vev,\vecc)$-shadow of $P_I$, and $\ves_{\vex'}$ is not an available pivot-direction at $B$. Then there exists a Simplex pivot at $B$ such that the resulting basis $B'$ satisfies:
    \begin{enumerate}
        \item $B'$ corresponds to $\vex'$,
        \item $|B'\cap R(\vex')| = |B\cap R(\vex')| + 1$, and
        \item $B'\setminus K$ is on the $(\vev,\vecc)$-shadow of $P_{K}$ where $K = B'\cap R(\vex')$.
    \end{enumerate}
    Furthermore, such a Simplex pivot can be identified in strongly polynomial time.
\end{lem}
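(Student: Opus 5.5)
The plan is to perform a shadow pivot in the face-restricted LP $(P_I)$, where $I = B\cap R(\vex')$, and then check that it is a degenerate Simplex pivot of $(P)$ that enters an index of $R(\vex')$ and leaves an index outside $I$. The set $B\setminus I$ is a basis of $P_I$ for the vertex $\vex'$; this uses that $\vex'\in P_I$ because $I\subseteq T(\vex')$. Its basic cone is
\[
\mathcal{C}(P_I,B\setminus I)=\set{\vez: A\vez=\veo,\ D_I\vez=\veo,\ D_{B\setminus I}\vez\le\veo},
\]
and its pivot-directions are exactly the pivot-directions of $\mcb$ that keep every row of $I$ tight. Note that $\ves\coloneqq\ves_{\vex'}$ lies in this cone, since $D_I\ves=\veo$ by the definition of $R(\vex')$. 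Moreover $\vex'$ is not $\vecc$-maximal in $P_I$, because $\ves$ is $\vecc$-increasing and feasible there. Hence $B\setminus I$ is not $\vecc$-maximal, the shadow pivot-direction $\vez'$ at $B\setminus I$ in $P_I$ exists, and it is computable by enumerating the at most $n$ pivot-directions.

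\textbf{Step 1 (strict ratio).} By \autoref{cor:shadow_pivot_LP} applied to $(P_I)$, $\vez'$ maximizes $\vecc^\T\vez/\vev^\T\vez$ over the basic cone. Since $\ves$ lies in that cone, $\vez'$ has ratio at least that of $\ves$. I claim the inequality is strict. The optimum of $(Q^{B\setminus I})$ is attained on the face spanned by the extreme rays of optimal ratio. If the ratios were equal, the LP restricted to $\mc(\vex')$, which is $(Q^{\vex'})$, would have $\ves$ optimal; by genericity of $\vecc$ this optimum is unique, namely the edge. I would use this to argue that $\ves$ is itself an extreme ray of the basic cone of $P_I$, hence of $\mcb$, contradicting the hypothesis that $\ves$ is not an available pivot-direction. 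This is the delicate point: it requires showing that an optimal face of the basic cone containing the edge $\ves$ must be the ray $\ves$, and I would invoke genericity of $\vecc$ (uniqueness of optimal rays) to conclude this.

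\textbf{Step 2 (the pivot).} Since the ratio is strict, \autoref{lem:violate_edge_constraint} gives some $j\in R(\vex')$ with $D_j^\T\vez'>0$. As $j\in T(\vex')$, the pivot along $\vez'$ has step length $\alpha'=0$, so it is degenerate, and $j$ is a valid entering index; this gives item 1. The direction $\vez'$ keeps $I$ tight, so the leaving index lies in $B\setminus I$. Also $j\notin I$, because $D_I\vez'=\veo$. Therefore $B'\cap R(\vex')\supseteq I\cup\{j\}$. The leaving index may or may not be in $R(\vex')$. If it were in $R(\vex')$, it would belong to $B\cap R(\vex')=I$, which we just excluded. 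So $|B'\cap R(\vex')|=|I|+1$, which is item 2. The pivot is a Simplex pivot because $\vecc^\T\vez'>0$.

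\textbf{Step 3 (staying on the shadow).} By \autoref{lem:stay_on_shadow} applied in $(P_I)$, $B'\setminus I$ is on the $(\vev,\vecc)$-shadow of $P_I$. Then \autoref{prop:restricted_shadow}, applied in $P_I$ with $X=\{j\}$, shows that $B'\setminus K$ is on the shadow of $(P_I)_{\{j\}}=P_K$, where $K=I\cup\{j\}=B'\cap R(\vex')$. This gives item 3.

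Every step is a finite enumeration of pivot-directions, plus checks of ratios and signs over $T(\vex')$, so the pivot is found in strongly polynomial time. The main obstacle is the strictness argument in Step 1.
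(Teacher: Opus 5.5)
Your proposal is correct and follows essentially the same route as the paper: a shadow pivot in $(P_I)$ from $B\setminus I$, strictness of the ratio via genericity of $\vecc$, then \autoref{lem:violate_edge_constraint} to get an entering $j\in R(\vex')\setminus I$, and \autoref{lem:stay_on_shadow} together with \autoref{prop:restricted_shadow} for item 3. In Step 1 the detour through uniqueness in $(Q^{\vex'})$ is unnecessary. The paper argues directly that $\vez'$ is not parallel to $\ves_{\vex'}$, because pivot-directions at $B\setminus I$ in $P_I$ are pivot-directions at $B$, so genericity of $\vecc$ already rules out equal ratios.
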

\begin{proof}
    Consider the LP $(P_I)$ and let $\tilde{B} = B\setminus I$. Then $\tilde B$ is a basis of $(P_I)$ which corresponds to $\vex'$, and by assumption, $\tilde B$ is on the $(\vev,\vecc)$-shadow of $P_I$. By \autoref{lem:shadow_edge_LP}, $\ves_{\vex'}$ is an optimal solution to the LP
    $\max \{ \vecc^\T \vez : \vev^\T\vez = 1, \;
		\vez\in \mc(P,\vex')\}$.

    So $\ves_{\vex'}\in\mc(P,\vex')$. Furthermore, by definition $D_i^\T\ves_{\vex'} = 0$ for all $i\in I$, so $\ves_{\vex'}\in \mc(P_I,\vex')$. Then, since $\mc(P_I,\vex')$ is a restriction of $\mc(P,\vex')$, it follows that $\ves_{\vex'}$ is an optimal solution to the LP
    $\max \{ \vecc^\T \vez : \vev^\T\vez = 1, \;\vez\in \mc(P_I,\vex')\}.$
    Let $\vez'$ be the shadow pivot direction at $\tilde B$ in $(P_I)$.
    By \autoref{cor:shadow_pivot_LP}, $\vez'$ is an optimal solution to the LP
     $\max \{ \vecc^\T \vez : \vev^\T\vez = 1, \;\vez\in \mc(P_I,\tilde B) \}.$
    Note that since $\ves_{\vex'}$ is not an available pivot-direction at $B$ it is also not an available pivot-direction at $\tilde B$. Then $\vez'\neq \ves_{\vex'}$. Nevertheless, $\ves_{\vex'}\in \mc(P_I,B)$, and so we have that
    $
    \frac{\vecc^\T\vez'}{\vev^\T\vez'} > \frac{\vecc^\T\ves_{\vex'}}{\vev^\T\ves_{\vex'}},
    $
    where the strictness of the inequality follows from the fact that $\frac{\vecc^\T\vez'}{\vev^\T\vez'} \neq \frac{\vecc^\T\ves_{\vex'}}{\vev^\T\ves_{\vex'}}$, because $\vecc$ is generic (see Lemma 8 in \cite{BDLKS24}).
 Therefore, by \autoref{lem:violate_edge_constraint} there exists $j\in R(\vex')$ such that $D_j^\T\vez' > 0$. Furthermore, by the definition of $P_I$, $D_i^\T\vez' = 0$ for all $i\in I$, so $j\in R(\vex')\setminus I$.
    Then at the basis $B$ of $(P)$, $\vez'$ is an available, degenerate pivot direction which satisfies $\vecc^\T\vez' > 0$. Since $j\in R(\vex')\subseteq T(\vex')$, we have that the inequality $D_j^\T\vex \leq \ved(j)$ is tight at $\vex'$, and since we also have that $D_j^\T\vez' > 0$, we therefore have that $j$ is a valid choice of entering inequality index.
    Let $i$ be the index removed from $B$ to pivot in direction $\vez'$.
    Let $B' = (B\setminus\set{i})\cup \set{j}$ be the basis of $(P)$ corresponding to $\vex'$ which is obtained from $B$ by selecting $\vez'$ as a pivot direction and choosing $j$ as the entering inequality index. Let $K = B'\cap R(\vex')$. Note then that $|K| = |I| + 1$. It remains to show that $B'\setminus K$ is on the $(\vev,\vecc)$-shadow of $P_K$.

    Let $\tilde{B'} = (\tilde B \setminus \set{i})\cup \set{j}$. Then $\tilde{B'}$ is the basis of $(P_I)$ obtained from the basis $\tilde B$ of $(P_I)$ by pivoting along the shadow pivot direction at $\tilde B$ in $(P_I)$. By \autoref{lem:stay_on_shadow}, $\tilde {B'}$ is on the $(\vev,\vecc)$-shadow of $P_I$. By \autoref{prop:restricted_shadow}, $\tilde{B'}\setminus \set{j}$ is on the $(\vev,\vecc)$-shadow of $P_{I\cup\set{j}}$. We have $K = I\cup \set{j}$, and $\tilde{B'} = B' \setminus(I\cup\set{j}) = B'\setminus K$. Then $B'\setminus K$ is on the $(\vev,\vecc)$-shadow of $P_K$, as desired.

We now discuss the running time needed to identify the pivot. Since the chosen pivot direction $\vez'$ is the shadow pivot direction at $\tilde B$, it is the pivot direction that maximizes $\frac{\vecc^\T\vez}{\vev^\T\vez}$ over all $\vecc$-increasing extreme ray generators of $\mc(P_I,\vex')$. Equivalently, it is the pivot direction that maximizes $\frac{\vecc^\T\vez}{\vev^\T\vez}$ over all $\vecc$-increasing extreme ray generators $\vez$ of $\mc(P,B)$ which satisfy the additional condition that $D_I^\T\vez = \veo$. This pivot direction is easily identified as long as we know $I = B\cap R(\vex')$, which itself is easy to determine if $\ves_{\vex'}$ is known. Likewise, if $\ves_{\vex'}$ is known then a choice $j$ of entering inequality index can be found by testing whether $D_j^\T\vez' > 0$ for each $j\in R(\vex')\setminus I$.
\end{proof}

We are now ready to prove \autoref{thm:make_shadow_edge_available}.

\begin{proof}[Proof of \autoref{thm:make_shadow_edge_available}]
    We can compute $T(\vex')$ by computing $D_i^\T\vex'$ for all $i\in[m_D]$, and given $\ves_{\vex'}$ we can then compute $R(\vex')$ by computing $D_i^\T\ves_{\vex'}$ for all $i\in T(\vex')$. If $\ves_{\vex'}$ is an available pivot direction at $B$, this can be easily identified and we are done. Let $I=B\setminus R(\vex')$.
    By \autoref{prop:restricted_shadow}, $B\setminus I$ is on the $(\vev,\vecc)$-shadow of $P_I$. By \autoref{lem:add_shadow_constraint}, we can identify in strongly polynomial time a Simplex pivot at $B$ such that the resulting basis $B'$ satisfies that $|B'\cap R(\vex')| = |B\cap R(\vex')| +1$. Furthermore, we either have that (1) $\ves_{\vex'}$ is an available pivot direction at $B'$, or (2) $B'$ itself satisfies the hypotheses of \autoref{lem:add_shadow_constraint}.
Therefore, after an application of \autoref{lem:add_shadow_constraint}, we increase the size of the intersection of $R(\vex')$ with our current basis, and if $\ves_{\vex'}$ is not an available pivot direction, we can apply \autoref{lem:add_shadow_constraint} again. If we do not reach a basis at which $\ves_{\vex'}$ is an available pivot direction within $n - \rank(A) -|I|-2$ pivots, after the $(n - \rank(A) - 1 -|I|)$-th application of \autoref{lem:add_shadow_constraint}, we reach a basis $B^*$ where $|B^*\cap R(\vex')| = |I| + (n - \rank(A) - 1 - |I|) = n - \rank(A) - 1$. Furthermore, since the rows of $D$ indexed by $B$ are linearly independent, the rows of $D$ indexed by $B^*\cap R(\vex')$ are linearly independent. Then $\ves_{\vex'}$ is the unique (up to scaling) vector satisfying $D_j^\T\vez = 0$ for all $j\in B^*\cap R(\vex')$. Thus, $\ves_{\vex'}$ is an available pivot direction at $B^*$, as desired.
Since the size of the intersection of $R(\vex')$ with our current basis increases by one after each pivot, the number of Simplex pivots performed is exactly $|B^*\cap R(\vex')| - |B\cap R(\vex')|$.
\end{proof}

\section{Degeneracy via shadow edges in 0/1-LPs}\label{sec:0/1}

In this section, we will apply \autoref{thm:make_shadow_edge_available} in the context of 0/1-LPs.
To simplify the exposition and the analysis, we assume we are dealing with LPs of the form
\begin{equation}\label{lpi}
\max\set{\vecc^\T\vex : A\vex \leq \veb, \vex\in\R^n}
\end{equation}

Note that this assumption can be made without loss of generality:
given an LP of the form \lp, we can easily transform it to the form \lpi\ by replacing each equality constraint with 2 inequalities. We further assume without loss of generality that $A\vex \leq \veb$ includes the inequalities $-\vex\leq\veo$, $\vex\leq \ve 1$.

We point out that leveraging \autoref{thm:make_shadow_edge_available} is not as straightforward as it may seem. In particular, the sequence of basis exchanges it describes assume that we \textit{start} at a basis on the $(\vev,\vecc)$-shadow, but by performing pivots which are shadow pivots in \textit{face-restrictions} of $(P)$ (rather than performing the shadow pivots of $(P)$ itself) the bases we visit may no longer be on the shadow of $(P)$. As such, when we do perform a non-degenerate pivot along $\ves_{\vex'}$ the basis $B'$ at which we arrive may not be on the shadow of $(P)$. In such a circumstance, we would not be capable of repeating the process outlined by \autoref{thm:make_shadow_edge_available}. Thus, leveraging this result requires some way to get back onto the shadow.
For 0/1-LPs, this can be done by exploiting the so-called \emph{coordinate basis}, introduced in~\cite{BDLKS24}.

\begin{defn}[Definition $1$ in \cite{BDLKS24}]
    \label{defn:coordinate_basis}
    Given a 0/1-LP $(P)$ of the form (\ref{lpi}) and an extreme point solution $\vex'$, let $U = \set{i\in[n] : \vex'(i) = 1}$ and $L=\set{i\in[n] : \vex'(i)=0}$. We define the coordinate basis at $\vex'$ to be the unique basis $B$ whose basic cone is
    \[
    \mcb = \set{\vez\in\R^n: \vez(U)\leq \ve 0, -\vez(L)\leq \ve 0}.
    \]
\end{defn}

That is, the coordinate basis at $\vex'$ is precisely the basis whose row-indices correspond to the coordinate bounds which are tight at $\vex'$. It is not hard to see that, for the vector $\vev=(\ve1-2\vex')$, we have that $(-\vev)$ is the sum (and therefore a conic combination) of the rows indexed by the coordinate basis. This makes $\vex'$ the unique $(\ve 1 - 2\vex')$-minimizer in the feasible region, and so gives the following.

\begin{lem}[Lemma 1 in \cite{BDLKS24}]\label{lem:coordinate_basis_on_shadow}
    Given a 0/1-LP $(P)$ of the form (\ref{lpi}) and a vertex $\vex'$, let $\vev = \ve 1 - 2\vex'$. Then, $\vex'$ is on the $(\vev,\vecc)$-shadow and the coordinate basis $B$ at $\vex'$ is on the $(\vev,\vecc)$-shadow.
\end{lem}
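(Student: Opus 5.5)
We verify the two conditions of \autoref{defn:basis_shadow} directly for the coordinate basis $B$, with $\vev = \ve 1 - 2\vex'$. The vertex statement then follows from the remark after \autoref{defn:basis_shadow}: $\vex'$ is on the shadow if some basis $B$ with $\vex^B=\vex'$ is on the shadow.

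First we check that $B$ is a feasible basis for $\vex'$. The rows indexed by $B$ are $\ve e_i^\T$ for $i\in U$ (from $\vex\le\ve 1$) and $-\ve e_i^\T$ for $i\in L$ (from $-\vex\le\veo$). Since $\vex'$ is a 0/1 vector, $U\cup L=[n]$ is a partition. These $n$ rows are therefore linearly independent and tight at $\vex'$, so $\vex^B=\vex'$.

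For condition 1), we take the convex combination $\vew=-\vev$, i.e.\ weight $1$ on $-\vev$ and $0$ on $\vecc$. Coordinatewise, $-\vev(i) = 2\vex'(i)-1$, which equals $1$ for $i\in U$ and $-1$ for $i\in L$. Hence
\[
-\vev = \sum_{i\in U}\ve e_i + \sum_{i\in L}(-\ve e_i),
\]
which is the sum of the rows $D_B$. This is a conic combination, so condition 1) holds for $B$. It also holds for $T(\vex')\supseteq B$, since we may put zero weight on the additional rows.

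For condition 2), take any $\vez\in\mcb$, so $\vez(U)\le\veo$ and $\vez(L)\ge\veo$. Then
\[
\vev^\T\vez = \sum_{i\in L}\vez(i) - \sum_{i\in U}\vez(i) = \sum_i|\vez(i)| \ \ge 0,
\]
with equality only if $\vez=\veo$. Thus the hypothesis of 2) holds: no direction in $\mcb$ has $\vev^\T\vez<0$. Its conclusion also holds, because any $\vez\in\mcb$ with $\vev^\T\vez=0$ is $\veo$, and $\veo$ is not $\vecc$-increasing. Since $\mc(\vex')\subseteq\mcb$, the same argument gives condition 2) for the feasible cone at $\vex'$, so $\vex'$ is on the shadow directly as well.

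No step is a real obstacle. The only point needing care is the identification $U\cup L=[n]$, which relies on $\vex'$ being 0/1 and makes the coordinate basis well defined. The rest is computation.
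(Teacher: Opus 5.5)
Your proof is correct and follows the route the paper indicates: you write $-\vev$ as the sum of the rows of the coordinate basis, which gives condition 1. Your identity $\vev^\T\vez=\sum_i|\vez(i)|$ on $\mcb$ is the cone-level form of the paper's remark that $\vex'$ is the unique $\vev$-minimizer, and it settles condition 2 for both $\mcb$ and $\mc(\vex')\subseteq\mcb$.
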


Our starting point is the True Steepest-Edge pivot rule, defined in \cite{BDLKS24}, which we briefly recall next.
At a vertex $\vex'$ defined by an initial basis $B$, this pivot rule performs two sequences of pivoting operations. The first is a sequence of at most $n$ degenerate pivots to arrive at the coordinate basis $B'$ at $\vex'$. After this, it performs a sequence of Simplex pivots by selecting a pivot-direction $\bm z'$ that maximizes $\frac{\bm c^\T\bm z}{\bm v^\T\bm z}$ over all pivot-directions satisfying $\bm c^\T\bm z > 0$. 
The authors prove that in a finite but undetermined number of degenerate pivots, the True Steepest-Edge pivot rule performs a non-degenerate pivot.
Moreover, they show that when the True Steepest-Edge pivot rule ultimately performs this non-degenerate pivot at $\vex'$, it is guaranteed to be along the edge-direction that maximizes $\frac{\vecc^\T\vez}{\|\vez\|_1}$ over all edge-directions at $\vex'$. This direction is called the steepest edge-direction in \cite{DKL2022}, and it is proved there that moving along this direction yields an optimal solution within a strongly polynomial number of non-degenerate steps.  They state as an open question whether one can bound the number of Simplex pivots(i.e. pivots along $\vecc$-increasing directions) necessary to pivot along the steepest edge-direction starting from the coordinate basis.  Using \autoref{thm:make_shadow_edge_available} we answer this question, showing that this can be done in a linear number of degenerate Simplex pivots.

As discussed earlier, it is valuable to restrict our attention to Simplex pivots only, even for degenerate pivots which do not actually change the objective function value.  This is due to the fact that the Simplex method is agnostic to what type of pivot it is performing---it does not know ahead of time whether a pivot will ultimately be degenerate or not.

This value persists even despite the fact that in \cite{BDLKS24}, the first sequence of pivots they perform to arrive at the coordinate basis may or may not be Simplex pivots.
Their True Steepest-Edge pivot rule only performs such pivots  when moving to the coordinate basis.  This is done only because of special properties of the coordinate basis which make it uniquely simple and computationally inexpensive to pivot to directly: This basis is known \textit{a priori} (i.e. requires no computation to find), and since its basic submatrix is an identity matrix, it is trivial to put the LP into canonical form for this basis (e.g. it is not necessary to perform a typically-expensive matrix-inverse computation).
For these reasons, it remains important in \cite{BDLKS24} (and therefore, here) that all \textit{subsequent }degenerate pivots remain Simplex pivots.

Therefore, even though our results in Section \ref{sec:shadows} require fore-knowledge of the shadow edge $\ves_\vex'$, in order to answer the open question of \cite{BDLKS24} we are precluded from pivoting in an \textit{arbitrary }way to a basis at which $\ves_\vex'$ is an available pivot (in the same way that the authors of \cite{BDLKS24} pivot directly to the coordinate basis).  Instead, we must utilize \autoref{thm:make_shadow_edge_available} to ensure that the degenerate pivots performed are indeed Simplex pivots. To this end, we modify the second portion of the True Steepest-Edge pivot rule, yielding a pivot rule which escapes degeneracy from the coordinate basis with a \emph{linear} number of Simplex pivots.

\begin{defn}
    Given a 0/1-LP as in \lpi, a basis $B$ and a vertex $\vex'$ associated to $B$, let $\vev = \ve1 - 2\vex'$. The \name pivot-rule works as follows:
    \begin{enumerate}
        \item If we have not yet visited the coordinate basis $B'$ corresponding to $\vex'$, it performs a pivot which adds an element of $B'$ and removes one not in $B'$.
        \item If $\ves_{\vex'}$ is an available pivot direction, it selects it.
        \item Otherwise, it selects a pivot direction $\vez'$ that maximizes $\frac{\vecc^\T\vez}{\vev^\T\vez}$ over all pivot directions $\vez$ satisfying (a) $\vecc^\T\vez > 0$ and (b) $A_i^\T\vez = 0$ for all $i\in B\cap R(\vex')$.
    \end{enumerate}
    Finally, it selects as an entering index any $j\in R(\vex')$ with $A_j^\T\vez' > 0$.
\end{defn}

This definition differs from that of the True Steepest-Edge pivot rule of \cite{BDLKS24} only in points 2 and 3.
It is easy to see that this choice of entering index yields indeed a new basis, i.e., that the resulting index set indexes a set of linearly independent rows of $A$.

\begin{thm}\label{thm:polynomial_total_pivots}
    Given a 0/1-LP of the form \lpi, and any initial basis $B$, the \name pivot-rule reaches an optimal solution in a number of pivots which is bounded above by a function $f(n)$ which is a polynomial that depends on $n$ only.
\end{thm}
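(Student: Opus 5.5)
The plan is to bound the total number of pivots as (number of distinct vertices visited) $\times$ (pivots performed at each vertex). The Shifting Shadow rule never leaves a vertex except along $\ves_{\vex'}$ for $\vev = \ve1 - 2\vex'$. The first step is to identify this shadow edge with the steepest edge-direction of \cite{DKL2022}, which maximizes $\frac{\vecc^\T\vez}{\|\vez\|_1}$. For any feasible direction $\vez$ at a 0/1 vertex $\vex'$, we have $\vez(i)\ge 0$ when $\vex'(i)=0$ and $\vez(i)\le 0$ when $\vex'(i)=1$. Hence $\vev^\T\vez = \sum_i (1-2\vex'(i))\vez(i) = \|\vez\|_1$ on $\mc(\vex')$. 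The ratio $\frac{\vecc^\T\vez}{\vev^\T\vez}$ is therefore exactly the steepest-edge ratio, and $\ves_{\vex'}$ is the steepest edge-direction. By the result of \cite{DKL2022}, always moving along it reaches an optimum within a strongly polynomial number $g(n)$ of non-degenerate steps. Note that $\vex'$ is on the $(\vev,\vecc)$-shadow by Lemma~\ref{lem:coordinate_basis_on_shadow}, so $\ves_{\vex'}$ is defined. Since $\vecc$ is generic and we only move along $\vecc$-increasing edges, no vertex is revisited, and the analysis can be done vertex by vertex.

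At a fixed non-optimal vertex $\vex'$, reached at some arbitrary basis $B$, I would split the pivots into two phases.

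\textbf{Phase 1} (step 1 of the rule). Each pivot adds an element of the coordinate basis $B'$ and removes an element not in $B'$. This strictly increases $|B\cap B'|$, so after at most $n$ pivots we are at $B'$. These pivots are all degenerate, since every basis involved is a basis of the same vertex; this should be checked, but each pivot is just a basis exchange among constraints tight at $\vex'$. I also need that such an exchange always exists while $B\ne B'$. This is a standard matroid basis-exchange argument on the rows tight at $\vex'$: $B'$ consists of unit vectors, and for any $k\in B'\setminus B$ the element entering can be paired with some leaving element of $B\setminus B'$.

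\textbf{Phase 2} (steps 2 and 3). By Lemma~\ref{lem:coordinate_basis_on_shadow}, $B'$ is on the $(\vev,\vecc)$-shadow. Rule 3 is precisely the pivot constructed in the proof of Lemma~\ref{lem:add_shadow_constraint}. That pivot is the shadow pivot direction of $\tilde B = B\setminus I$ in $P_I$, with $I = B\cap R(\vex')$, which is the same as maximizing the ratio over extreme rays of $\mc(B)$ with $A_I\vez=\veo$; the entering index is taken from $R(\vex')$. So the argument of Theorem~\ref{thm:make_shadow_edge_available} applies verbatim. Starting from $B'$, Proposition~\ref{prop:restricted_shadow} supplies the hypothesis of Lemma~\ref{lem:add_shadow_constraint}. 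Each pivot is a degenerate Simplex pivot that increases $|B\cap R(\vex')|$ by one and preserves the hypothesis. Hence within at most $n-1$ pivots $\ves_{\vex'}$ becomes available, and rule 2 then takes it, giving one non-degenerate pivot to the next vertex.

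Altogether each vertex costs at most $n + (n-1) + 1 = 2n$ pivots, so the total satisfies $f(n)\le 2n\cdot(g(n)+1)$, a polynomial in $n$ only.

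One point to treat carefully: after the non-degenerate pivot, the new basis need not be on the shadow for the new $\vev$. This is why rule 1 reroutes to the new coordinate basis, so the Phase 2 argument restarts cleanly at every vertex. I also must check that the bound of \cite{DKL2022} depends on $n$ only; it does, being a strongly polynomial bound in $n$ for 0/1 polytopes.

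The main obstacle I expect is making sure that the tie-breaking and the entering-index choice in rule 3 match exactly the construction in Lemma~\ref{lem:add_shadow_constraint}. In particular, $\vecc$ must be generic enough that the maximizer is unique and strictly better than $\ves_{\vex'}$. I would handle this by invoking the genericity argument cited there (Lemma 8 of \cite{BDLKS24}).
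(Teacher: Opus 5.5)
Your proposal is correct and follows the paper's proof. The shared route is: at most $n$ exchanges to reach the coordinate basis; Lemma~\ref{lem:coordinate_basis_on_shadow} to place it on the $(\ve 1-2\vex',\vecc)$-shadow; the pivot sequence of Theorem~\ref{thm:make_shadow_edge_available} (which rule 3 reproduces exactly) to make $\ves_{\vex'}$ available and take it; and the steepest-edge bound of \cite{DKL2022} on the number of non-degenerate steps. The only difference is that you verify directly that $\vev^\T\vez=\|\vez\|_1$ on $\mc(\vex')$, so that the shadow edge is the steepest edge, whereas the paper cites \cite{BDLKS24} for this fact.
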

\begin{proof}
Let $B$ be the first basis visited which corresponds to a vertex $\vex'$. First, the \name pivot-rule performs a sequence of at most $n$ degenerate pivots to arrive at the coordinate basis $B'$ at $\vex'$. By \autoref{lem:coordinate_basis_on_shadow}, the coordinate basis $B'$ is on the $(\vev,\vecc)$-shadow of $P$ for the choice of $\vev$ in the statement of \autoref{lem:coordinate_basis_on_shadow}.
    Then, the \name pivot-rule performs the sequence of Simplex pivots as described in \autoref{thm:make_shadow_edge_available}, which is of length at most $n$, until it performs a non-degenerate Simplex pivot along the shadow edge $\ves_{\vex'}$.
    As shown in \cite{BDLKS24}, for this choice of $\vev$, this edge-direction is the steepest edge-direction at $\vex'$, and thus the total number of non-degenerate pivots performed by the \name pivot-rule is bounded by a polynomial in $n$ only.
\end{proof}

As in~\cite{KS24}, our approach relies on the knowledge of the shadow edge-direction, and hence it presumes some computation of $\ves_{\vex'}$. For 0/1-LPs, (\ref{eq:shadow_edge_lp}) can be solved in \textit{strongly} polynomial time \cite{GLS}, but for general LPs this computation is not any easier than solving the LP itself. Finding simpler pivoting rules that achieve the same effect remains an open challenge. Nevertheless, we remark that for several important polytope classes---such as matching and flows---this edge-direction is easier to obtain: standard combinatorial algorithms often iteratively compute \emph{shortest} augmenting paths, which geometrically correspond to steepest edge-directions, and thus to the shadow edge used by the Shifting Shadow pivot-rule. Independently, we stress that our goal is theoretical more than practical: we aim at understanding the capabilities and limitations of the Simplex method, to contribute to the long-standing question of whether it can run in polynomial time---it is already known empirically to solve LPs quickly in practice (see, e.g., the discussion in \cite{BBHK26}).

Indeed, as a byproduct, we obtained an improved bound on the number of Simplex pivots required to escape degeneracy from a coordinate basis in 0/1-LPs, resolving an open question from~\cite{BDLKS24}.

We conclude by mentioning that finding an initial basis, a problem usually referred to as the \textit{Phase I}, is itself easy to model as a 0/1-LP if the original LP is 0/1. A well-known trick for this is to use a pyramid construction as in~\cite{Megiddo}. 
\subsection{Necessity of the Coordinate Basis}
We provide an example of a 0/1-LP of the form (\ref{lpi}) and a basis $B$ corresponding to one of its vertices $\vex'$ with the property that 
\begin{enumerate}
    \item the only available $\vecc$-improving pivot at $B$ is along an edge,
    \item the objective function improvement gained by pivoting along this edge is some arbitrarily small value $\varepsilon$, and
    \item the objective function improvement gained by pivoting along the steepest edge at $\vex'$ is $1+\varepsilon$.
\end{enumerate}
That is, it is possible to be at a basis $B$ corresponding to a vertex $\vex'$ of a 0/1-LP from which it is impossible to pivot along the steepest edge at $\vex'$ by performing a sequence of Simplex pivots (i.e., $\vecc$-increasing pivots).  Moreover, the only non-degenerate pivot which can be performed by performing Simplex pivots produces arbitrarily bad objective function improvement.  This demonstrates that the capacity for the Simplex method to follow particular paths (like that generated by steepest edges) or guarantee reasonable objective function improvement at each non-degenerate pivot is---in the presence of degeneracy---highly sensitive to \textit{which} basis corresponding to $\vex'$ it starts from.  This is true even in the setting of 0/1-LPs wherein the steepest edge-direction is guaranteed to improve the objective function by a minimum amount (relative to the current optimality gap).  This highlights both the value and necessity of pivoting to the coordinate basis  at each vertex.

Let $\varepsilon>0$ and consider the LP in two variables
\begin{align*}
		&\max\, \vex(1) + \varepsilon\vex(2) \\
		\text{s.t.}& \\
		&\vex(1)-\vex(2)\leq 0 \\
		 -2&\vex(1)-\varepsilon\vex(2)\leq 0 \\
        & \veo\leq\vex\leq \ve 1.
\end{align*}
It can be checked that the feasible region of this LP is the convex hull of the points $(0,0)$, $(0,1)$, and $(1,1)$.  Consider the basis $B$ corresponding to $(0,0)$ given by the inequalities $\vex(1)\geq 0$ and $-2\vex(1)-\varepsilon\vex(2)\leq 0$. The available pivot directions are $(0,1)$ and  $(\varepsilon,-2)$.

The only non-degenerate pivot direction is $(0,1)$, and this is also the only one of the available pivot directions with positive cost. Pivoting in this direction moves to the vertex $(0,1)$, resulting in an objective function improvement of $\varepsilon$.

On the other hand, the steepest edge at $(0,0)$ is the edge-direction $(1,1)$.  Pivoting in this direction moves to the vertex $(1,1)$, resulting in an objective function improvement of $1+\varepsilon$.

This construction makes use of the redundant inequality constraint $-2\vex(1)-\varepsilon\vex(2)\leq 0$.  Redundant inequalities appear frequently in LP formulations, especially for those modeling problems in combinatorial optimization which are themselves commonly 0/1-LPs.  Although identifying redundant equality constraints can be done efficiently, identifying redundant inequality constraints is no easier in general than solving the LP in question.  As such, one cannot ignore examples such as these coming from bases containing redundant inequalities.
\\\\
\noindent\textbf{AI Use Declaration:} No AI was used in any part of this work.

\bibliographystyle{elsarticle-num}
\bibliography{finalbib}

@article{BDLKS24,
  title={On the simplex method for 0/1-polytopes},
  author={Black, A. and De Loera, J. and Kafer, S. and Sanit{\`a}, L.},
  journal={Mathematics of Operations Research},
  year={2024},
  publisher={INFORMS}
}

@article{DKL2022,
author = {De Loera, J. and Kafer, S. and Sanit\`{a}, L.},
title = {Pivot Rules for Circuit-Augmentation Algorithms in Linear Optimization},
journal = {SIAM Journal on Optimization},
volume = {32},
number = {3},
pages = {2156-2179},
year = {2022},
doi = {10.1137/21M1419994},
URL = {  https://doi.org/10.1137/21M1419994 },
eprint = { https://doi.org/10.1137/21M1419994 },
}

@InProceedings{KS24,
author="Kukharenko, K.
and Sanit{\`a}, L.",
editor="Vygen, Jens
and Byrka, Jaros{\l}aw",
title="On the Number of Degenerate Simplex Pivots",
booktitle="Integer Programming and Combinatorial Optimization",
year="2024",
publisher="Springer Nature Switzerland",
address="Cham",
pages="252--264",
isbn="978-3-031-59835-7"
}

@book{GLS,
  author    = { Gr\"otschel ,M. and  Lov\'asz, L. and Schrijver, A.},
  title     = {Geometric Algorithms and Combinatorial Optimization},
  publisher = {Springer Berlin, Heidelberg},
  year      = {1993},
  ISBN ={978-3-642-78242-8},
}

@inproceedings{BBHK26,
  author    = {Bach, E. and Black, A. and 
               Huiberts, S. and Kafer, S.},
  title     = {Beyond Smoothed Analysis: Analyzing the Simplex Method By-the-Book},
  booktitle = {Proceedings of the 58th Annual {ACM} {SIGACT} Symposium on Theory
               of Computing, {STOC} 2026, Salt Lake City, UT, USA, June 22-27, 2026},
  pages     = {222--233},
  year      = {2026},
  doi = {https://doi.org/10.1145/3798129.380074},
}

@article{DantzigSimplex,
  title={Maximization of a linear function of variables subject to linear
inequalities},
  author={Dantzig, G.},
  journal={Koopmans, T.C. (ed.) Activity Analysis of Production and Allocation},
  pages={339--347},
  year={1951},
  publisher={John Wiley \& Sons Inc.}
}

@book{MINLP,
  title={Mixed integer nonlinear programming},
  author={Lee, J. and Leyffer, S.},
  volume={154},
  year={2011},
  publisher={Springer Science \& Business Media}
}

@book{Schrijver1986,
    AUTHOR = {Schrijver, A.},
     TITLE = {Theory of linear and integer programming},
    SERIES = {Wiley-Interscience Series in Discrete Mathematics},
      NOTE = {A Wiley-Interscience Publication},
 PUBLISHER = {John Wiley \& Sons Ltd.},
     PLACE = {Chichester},
      YEAR = {1986},
     PAGES = {xii+471},
      ISBN = {0-471-90854-1},
    REVIEW = {\MR{874114 (88m:90090)}},
}

@article{terlaky+zhang,
  author    = { Terlaky, T. and  Zhang, S.},
  title     = {Pivot rules for linear programming: {A} survey on recent theoretical
               developments},
  journal   = {Annals {OR}},
  volume    = {46-47},
  number    = {1},
  pages     = {203--233},
  year      = {1993},
  url       = {https://doi.org/10.1007/BF02096264},
  doi       = {10.1007/BF02096264}
}

@article{ShamirSurvey,
  title={The efficiency of the simplex method: a survey},
  author={Shamir, R.},
  journal={Management science},
  volume={33},
  number={3},
  pages={301--334},
  year={1987},
  publisher={INFORMS}
}

@article{Kar84,
title = {A new polynomial-time algorithm for linear programming},
author={N. Karmarkar},
journal = {Proceedings of STOC},
year = {1984}
}

@article{Kach79,
title = {A polynomial algorithm in linear programming},
author={L. Khachiyan},
journal = {Doklady Academii Nauk SSSR},
year = {1979}
}

@article{Smale00,
title = {Mathematical problems for the next century},
author={S. Smale},
journal = {Mathematics: frontiers and perspectives, American Mathematics Society},
year = {2000}
}

@article{Megiddo,
title = {A note on degeneracy in linear programming. Mathematical Programming},
author={N. Megiddo},
journal = {Mathematical Programming},
year = {1986}
}

@article{d4b74918-1611-3951-bc4e-72806d40b7e3,
 ISSN = {0003486X},
 URL = {http://www.jstor.org/stable/23234170},
 author = {Francisco Santos},
 journal = {Annals of Mathematics},
 number = {1},
 pages = {383--412},
 publisher = {Annals of Mathematics},
 title = {A counterexample to the {H}irsch Conjecture},
 urldate = {2025-03-31},
 volume = {176},
 year = {2012}
}

\end{document}